\numberwithin{equation}{section}
\newtheorem {theorem}{Theorem}[section]
\newtheorem {proposition}[theorem]{Proposition}
\newtheorem {lemma}[theorem]{Lemma}
\theoremstyle{definition}
\theoremstyle{theorem}
\newcommand{\dint}{\textup{d}}
\newcommand{\vol}{\mathrm{vol}}
\def\BB{\mathbb{B}}
\def\EE{\mathbb{E}}
\def\NN{\mathbb{N}}
\def\RR{\mathbb{R}}
\def\SS{\mathbb{S}}
\def\bC{\mathbf{C}}
\def\bN{\mathbf{N}}
\def\bP{\mathbf{P}}
\def\bU{\mathbf{U}}
\def\bW{\mathbf{W}}
\def\cD{\mathcal{D}}
\begin{document}

\title{\bfseries Large deviations for uniform projections\\ of $p$-radial distributions on $\ell_p^n$-balls}

\author{Tom Kaufmann\footnotemark[1],\;\; Holger Sambale\footnotemark[2],\;\; and Christoph Th\"ale\footnotemark[3]}

%\rhead{Concentration inequalities for Poisson cylinder processes}

\date{}
\renewcommand{\thefootnote}{\fnsymbol{footnote}}
\footnotetext[1]{Ruhr University Bochum, Germany. Email: tom.kaufmann@rub.de}

\footnotetext[2]{Ruhr University Bochum, Germany. Email: holger.sambale@rub.de}

\footnotetext[3]{Ruhr University Bochum, Germany. Email: christoph.thaele@rub.de}

\maketitle

\begin{abstract} \noindent We consider products of uniform random variables from the Stiefel manifold of orthonormal $k$-frames in $\mathbb{R}^n$, $k \le n$, and random vectors from the $n$-dimensional $\ell_p^n$-ball $\mathbb{B}_p^n$ with certain $p$-radial distributions, $p\in[1,\infty)$. The distribution of this product geometrically corresponds to the projection of the $p$-radial distribution on $\mathbb{B}^n_p$ onto a random $k$-dimensional subspace. We derive large deviation principles (LDPs) on the space of probability measures on $\RR^k$ for sequences of such projections.\\[2mm]
{\bf Keywords}. Large deviation principle, $\ell_p^n$-ball, random projection, Stiefel manifold \\
{\bf MSC:} 52A23, %(Asymptotic theory of convex bodies),
60F10, %(Large deviations)

\end{abstract}

\section{Introduction}

The study of high-dimensional convexity goes back to studying infinite-dimensional normed spaces via their local structures, such as their unit balls, but has since become of substantial interest in its own right. The concentration phenomena exhibited by convex objects in high dimensions, analyzed in the language of probability, are of great use in applications such as compressed sensing, information theory and approximation theory (see \cite{chafai2012interactions, foucartmathematical, hinrichs2021random, hinrichs2021random2, krieg2020random}). Analogues of many well known limit results from probability have been found in high-dimensional convexity, such as the central limit theorem (CLT) of Klartag \cite{KlartagCLT, KlartagCLT2}, and recently, starting with the work of Gantert, Kim, and Ramanan \cite{GKR}, large deviations began to be considered with increasing interest as well. For the sake of brevity, we refer the reader to the classic literature on large deviations theory for more details on basic definitions and results \cite{DZ}.

Especially the $n$-dimensional $\ell_p^n$-ball $\mathbb{B}_p^n :=\{x \in \RR^n: \|x\|_p \le 1\}$, for $p \in [1, \infty)$ and $\lVert x \rVert_p := \big( \sum_{i=1}^n |x_i|^p \big)^{1/p}$, has been extensively considered in this regard (see, e.g., \cite{APTldp, KPTSanov, KPTLimitThm, KPTLimitThm2, KP-Stiefel}) due to both its relevance in geometry and functional analysis, and its accessibility via useful probabilistic representation results, which will be outlined below. For an overview of classical and more recent results about $\ell_p^n$-balls, we refer to the survey article \cite{PTTSurvey}.\\ 
The setting of the present work is a generalization of the one initiated by Kabluchko and Prochno \cite{KP-Stiefel} and can be described as follows. For $k \le n$, the Stiefel manifold $\mathbb{V}_{n,k}$ is the set of all orthonormal $k$-frames in $\mathbb{R}^n$, i.\,e., the set of all $k$-tuples of orthonormal vectors $v_1, \ldots, v_k$ in $\mathbb{R}^n$. Arranging these vectors into a $k \times n$ matrix $V$ with rows $v_1^T, \ldots, v_k^T$, we have the identification
\[
\mathbb{V}_{n,k} = \{V \in \mathbb{R}^{k \times n} \colon  VV^T = I_k\},
\]
where $I_k$ denotes the $k \times k$ identity matrix. We equip $\mathbb{V}_{n,k}$ with the uniform distribution (i.\,e., the invariant Haar probability measure) $\mu_{n,k}$, writing $V_{n,k}$ for the corresponding random variable. Recall that $V_{n,k}$ is characterized by the following invariance property: for any orthogonal matrices $O \in \mathbb{R}^{k \times k}$ and $O' \in \mathbb{R}^{n \times n}$, $OV_{n,k}O'$ has the same distribution as $V_{n,k}$.

In particular, for random vectors $X^{(n)}$ taking values in $\mathbb{R}^n$, we may regard $V \in \mathbb{V}_{n,k}$ as a linear map $V \colon \mathbb{R}^n \to \mathbb{R}^k$ and study the distribution of the vectors $VX^{(n)} \in \mathbb{R}^k$, which we denote by
$$\mu_{VX^{(n)}}(A) := \mathbb{P}(VX^{(n)} \in A)$$
for any Borel set $A\subseteq \RR^k$. In addition, we may also choose $V_{n,k} \in \mathbb{V}_{n,k}$ at random according to the uniform distribution $\mu_{n,k}$ on $\mathbb{V}_{n,k}$. In this case, the distribution of $V_{n,k} X^{(n)}$, which we denote by%
\begin{equation}\label{eq:DefTargetMeasure}
\mu_{V_{n,k} X^{(n)}}(A) := \mathbb{P}(V_{n,k} X^{(n)} \in A),
\end{equation}
is a random probability measure on $\RR^k$, that is a random variable taking values in the space $\mathcal{M}_1(\mathbb{R}^k)$ of probability measures on $\mathbb{R}^k$, which we equip with the topology of weak convergence. This can geometrically be interpreted as the projection of the distribution of the random vector $X^{(n)}$  onto a uniform random $k$-dimensional subspace.

We are interested in large deviation principles (LDPs) for the random probability measures $\mu_{V_{n,k} X^{(n)}}$, where $X^{(n)} \in \mathbb{B}_p^n$ with a distribution taken from the class of $p$-radial distributions introduced by Barthe, Gud\'eon, Mendelson and Naor in \cite{BartheGuedonEtAl}. Kabluchko and Prochno \cite{KP-Stiefel} gave very general LDPs for random matrices in the orthogonal group and the Stiefel manifold, and showed an LDP for $k$-dimensional projections of the special case of the \textit{uniform distribution} on $\mathbb{B}_p^n$ as an application. Based on \cite{KP-Stiefel}, our work largely extends the set of projected distributions for which such an LDP is shown from the uniform distribution to the aforementioned class of $p$-radial distributions. In particular, we will see that the large deviation behaviour observed by Kabluchko and Prochno \cite{KP-Stiefel} is universal for a large class of probability measures on $\mathbb{B}_p^n$. Moreover, we shall describe geometrically motivated distributions on $\mathbb{B}_p^n$ for which the LDP needs a suitable modification we also provide. We should also delineate the present work from the results shown by Kim and Ramanan in \cite[Theorems 2.4 \& 2.6]{KimRamanan-Stiefel}, who have shown, among other results, LDPs for uniform random projections of uniform random vectors in $\mathbb{B}^n_p$ onto $k$-dimensional subspaces. By the same arguments as put forth in \cite{KP-Stiefel}, we note that while the settings are quite similar, the key difference is in the object of study, which in \cite{KimRamanan-Stiefel} is the projection point itself, hence yielding an LDP on $\RR^k$, whereas in both \cite{KP-Stiefel} and this work it is the projected distribution on $\RR^k$, thus the main result yields an LDP on the space $\mathcal{M}_1(\mathbb{R}^k)$ of probability measures on $\mathbb{R}^k$.

\medskip

In the next section we briefly list the notation and background material we will need to formulate our theorems, which in turn are presented in Section \ref{sec:MainResults}. Section \ref{sec:Proofs} will then contain their respective proofs.

\section{Preliminaries} \label{sec:Preliminaries}

Let us first define the objects and distributions needed for the main results. We write $\mathcal{B}(\mathbb{R}^n)$ for the Borel $\sigma$-algebra, $\langle \, \cdot\, , \, \cdot \, \rangle_2$ for the Euclidean scalar product, and $\vol_n(\cdot)$ for the Lebesgue measure on $\mathbb{R}^n$. As already mentioned in the introduction, for $p \in [1, \infty)$, $n\in\NN$, and $x=(x_1,\ldots,x_n)\in\RR^n$ we denote by
$\|x\|_p := \big(\sum\limits_{i=1}^n|x_i|^p\big)^{1/p}$
the $\ell_p^n$-norm of $x$ and by $\BB_p^n:=\{x\in\RR^n:\|x\|_p\leq 1\}$ and $\SS_p^{n-1}:=\{x\in\RR^n:\|x\|_p=1\}$ unit $\ell_p^n$-ball and unit $\ell_p^n$-sphere, respectively. We define the uniform distribution on $\BB^n_p$ and the cone probability measure on $\SS^{n-1}_p$ as 
$$\bU_{n,p}(\,\cdot\,) := {\vol_n(\,\cdot\,)\over \vol_n(\BB_p^n)}\qquad\text{and}\qquad\bC_{n,p}(\,\cdot\,) := {\vol_n(\{rx:r\in[0,1],x\in\,\cdot\,\})\over\vol_n(\BB_p^n)}.$$
Following \cite{BartheGuedonEtAl}, for a sequence $(\bW_n)_{n\in\NN}$ of Borel probability measures $\bW_n$ on $[0, \infty)$ we define the sequence of distributions
\begin{equation} \label{eq:DefPnpW}
\bP_{n,p,\bW_n} := \bW_n(\{0\})\bC_{n,p}+\Psi_n\bU_{n,p}
\end{equation}
on $\BB^n_p$, where $\Psi_n(x)=\psi(\|x\|_p)$, $x\in\BB_p^n$, is the $p$-radial density given by
$$
\displaystyle \psi_n(s) = \displaystyle {1\over p^{n/p}\Gamma\big({n\over p}+1\big)}{1\over (1-s^p)^{{n\over p}+1}}\bigg[\int_{(0,\infty)} w^{n\over p} \, e^{-\frac{1}{p}\big({s^p\over 1-s^p}\big) w} \, \bW_n(\dint w)\bigg],\qquad 0\leq s\leq 1.
$$
One can think of $\bW_n$ as indicating how probability mass is distributed $p$-radially within $\BB_p^n$.
The motivation behind this class of distributions is twofold. First, they encompass many relevant distributions on $\BB_p^n$. For instance, choosing $\bW_n \equiv \delta_0$ to be the Dirac measure at $0$, we have that $\bP_{n,p,\bW_n} \equiv  \bC_{n,p}$ and for $\bW_n \equiv \mathrm{Exp}(1)$, we have that $\bP_{n,p,\bW_n} \equiv  \bU_{n,p}$. For $m \in \NN$ choosing $\bW_n = \gamma(\frac{m}{p}, \frac{1}{p})$, i.e., a gamma distribution with shape $m/p$ and rate $1/p$, it can be shown that $\bP_{n,p,\bW_n}$ then corresponds to the projection of $\bC_{n+m,p}$ onto its first $n$ coordinates. An analogue correspondence is given for $\bW_n = \gamma(1 + \frac{m}{p}, \frac{1}{p})$ and the projection of $\bU_{n+m,p}$ onto its first $n$ coordinates, see \cite{BartheGuedonEtAl}. The second reason we consider the class of distributions $\bP_{n,p,\bW_n}$ specifically is a useful probabilistic representation result. For $p\in [1,\infty)$ we say a real-valued random variable $X$ has a $p$-generalized Gaussian distribution, denoted as $X \sim \bN_p$, if its distribution has Lebesgue density 
$$
\displaystyle f_p(x) := \frac{1}{2 \, p^{1/p} \, \Gamma\big(1+\frac{1}{p}\big)}\, e^{-{|x|^p}/{p}},  \qquad x\in\RR.$$
For $X \sim \bN_p$ and $r >0$ the $r$-th moment of $X$ is given by
\begin{equation} \label{eq:MomentXp}
\EE \left[X^r\right] = \Gamma \left( \frac{1 + r}{p}\right){\Gamma \left( \frac{1}{p}\right)}^{-1}.
\end{equation}
In particular, $\EE[X^r]<\infty$ for all $r>0$. Using this $p$-generalized Gaussian distribution, the following results from \cite[Theorem 3]{BartheGuedonEtAl} gives a way to represent a random vector $X^{(n)} \sim \bP_{n,p,\bW_n}$ in $\BB_p^n$ as a vector with i.i.d.\ $p$-generalized Gaussian coordinates, normalized by its norm and via $\bW_n$.

\begin{proposition}\label{prop:Barthe}
Let $n\in\NN$ and $p\in(0,\infty)$. Let $Z^{(n)} = (Z_1, \ldots, Z_n)$ be a random vector, where $Z_1,  \ldots, Z_n$ are i.i.d.\ with $Z_i \sim \bN_p$, and $W_n$ a random variable with distribution $\bW_n$ on $[0, \infty)$, independent of $Z^{(n)}$. Then the random vector
$$
Z^{(n)}\over (\|Z^{(n)}\|_p^p+W_n)^{1/p}
$$
has distribution $\bP_{n,p,\bW_n}$ as in \eqref{eq:DefPnpW}.
\end{proposition}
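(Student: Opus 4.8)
The plan is to reduce the claim to the classical $\ell_p$--polar decomposition of a vector with i.i.d.\ $\bN_p$ coordinates, followed by a one--dimensional change of variables. Set $U:=Z^{(n)}/\|Z^{(n)}\|_p\in\SS_p^{n-1}$ and $R:=\|Z^{(n)}\|_p^p$. The first step is to record the standard facts that $U$ is distributed according to the cone measure $\bC_{n,p}$, that $R$ has a $\gamma(\tfrac np,\tfrac1p)$ distribution, and that $U$, $R$, $W_n$ are mutually independent. Independence of $W_n$ from $(R,U)$ is part of the hypothesis, and the rest can be read off from the $\ell_p$--polar disintegration $\vol_n(\dint x)=n\,\vol_n(\BB_p^n)\,r^{n-1}\,\dint r\,\bC_{n,p}(\dint\theta)$ of Lebesgue measure on $\BB^n_p$ (with $x=r\theta$, $r\ge0$, $\theta\in\SS_p^{n-1}$), applied to the $p$--radial density $z\mapsto(2p^{1/p}\Gamma(1+\tfrac1p))^{-n}e^{-\|z\|_p^p/p}$ of $Z^{(n)}$: since its radial profile $e^{-r^p/p}$ is constant on $\ell_p$--spheres, the angular marginal is exactly $\bC_{n,p}$ and factors off, while a further substitution $r\mapsto r^p$ identifies the law of $R$ as $\gamma(\tfrac np,\tfrac1p)$.

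With this in hand I would write
\[
\frac{Z^{(n)}}{(\|Z^{(n)}\|_p^p+W_n)^{1/p}}=\Big(\frac{R}{R+W_n}\Big)^{1/p}U=:\rho\,U ,
\]
so the vector of interest is of the form $\rho\,U$ with $\rho\in[0,1]$ and, as $\rho$ depends only on $(R,W_n)$, with $U\sim\bC_{n,p}$ independent of $\rho$. Conditioning on $W_n$ then separates the two summands of \eqref{eq:DefPnpW}. On the event $\{W_n=0\}$, of probability $\bW_n(\{0\})$, one has $\rho=1$, so the vector equals $U\sim\bC_{n,p}$ and contributes the singular part $\bW_n(\{0\})\,\bC_{n,p}$. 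On an event $\{W_n=w\}$ with $w>0$ one has $\rho\in(0,1)$ almost surely, and I would compute the law of $\rho$ explicitly: the substitution $R=ws/(1-s)$ transforms the $\gamma(\tfrac np,\tfrac1p)$ density of $R$ into the density of $S:=R/(R+w)$ on $(0,1)$, and then $\rho=S^{1/p}$ (i.e.\ $S=\rho^p$) gives a Lebesgue density for $\rho$ equal to $\frac{p^{1-n/p}}{\Gamma(n/p)}\,w^{n/p}r^{n-1}(1-r^p)^{-n/p-1}e^{-\frac1p\frac{wr^p}{1-r^p}}$ on $(0,1)$.

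To conclude, one runs the $\ell_p$--polar disintegration in reverse: if $\rho\in[0,1]$ is independent of $U\sim\bC_{n,p}$ with Lebesgue density $g$, then $\rho\,U$ has Lebesgue density $x\mapsto \frac{g(\|x\|_p)}{n\,\vol_n(\BB_p^n)\,\|x\|_p^{n-1}}$ on $\BB^n_p$. Plugging in the density of $\rho$ just found and simplifying via $n\,\Gamma(n/p)=p\,\Gamma(n/p+1)$ and $\vol_n(\BB_p^n)=(2\Gamma(1+\tfrac1p))^n/\Gamma(\tfrac np+1)$, the conditional law of $\rho\,U$ given $W_n=w>0$ becomes absolutely continuous with density
\[
x\longmapsto\frac{1}{\vol_n(\BB_p^n)}\cdot\frac{1}{p^{n/p}\,\Gamma(\tfrac np+1)}\cdot\frac{w^{n/p}}{(1-\|x\|_p^p)^{n/p+1}}\,e^{-\frac1p\frac{w\|x\|_p^p}{1-\|x\|_p^p}}
\]
on $\BB^n_p$. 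Integrating this against $\bW_n$ over $(0,\infty)$ reproduces exactly $\Psi_n(x)/\vol_n(\BB_p^n)$, i.e.\ the absolutely continuous summand $\Psi_n\,\bU_{n,p}$ of \eqref{eq:DefPnpW}, and combined with the $\{W_n=0\}$ contribution this yields $\bP_{n,p,\bW_n}$. As a variant that bypasses the polar decomposition, one may instead, for fixed $w>0$, compute the Jacobian determinant of the map $x\mapsto(w/(1-\|x\|_p^p))^{1/p}x$, which inverts $z\mapsto z/(\|z\|_p^p+w)^{1/p}$ and equals $w^{n/p}(1-\|x\|_p^p)^{-n/p-1}$, reaching the same conditional density.

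The only genuinely delicate point is the bookkeeping of constants: one must track the normalisation $(2p^{1/p}\Gamma(1+\tfrac1p))^{-n}$, the volume $\vol_n(\BB_p^n)$, the Jacobians of the two substitutions $R\mapsto S$ and $S\mapsto\rho$, and the factor $n\,\vol_n(\BB_p^n)\,r^{n-1}$ from polar coordinates, and verify they collapse to precisely the prefactor appearing in the definition of $\psi_n$. Conceptually there is no obstacle once the $\ell_p$--polar disintegration and the mutual independence of $R$, $U$, $W_n$ have been established.
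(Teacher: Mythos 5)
Your argument is correct, and all the computations check out: the identification $U\sim\bC_{n,p}$, $R=\|Z^{(n)}\|_p^p\sim\gamma(n/p,1/p)$ with $U$, $R$, $W_n$ mutually independent; the density of $\rho=(R/(R+w))^{1/p}$ after the two substitutions; and the constant bookkeeping via $n\,\Gamma(n/p)=p\,\Gamma(n/p+1)$ all produce the prefactor $(p^{n/p}\Gamma(n/p+1)\vol_n(\BB_p^n))^{-1}$ appearing in $\Psi_n/\vol_n(\BB_p^n)$. The Jacobian $w^{n/p}(1-\|x\|_p^p)^{-n/p-1}$ in your alternative route is also right (it follows from the rank-one-update determinant formula applied to $x\mapsto\lambda(x)x$ with $\lambda(x)=(w/(1-\|x\|_p^p))^{1/p}$).

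Be aware, though, that the paper does not actually prove Proposition \ref{prop:Barthe}: it is imported verbatim from Barthe, Gu\'edon, Mendelson and Naor \cite[Theorem 3]{BartheGuedonEtAl}, so there is no in-paper proof to compare against. Your derivation via the $\ell_p$-polar disintegration $\vol_n(\dint x)=n\,\vol_n(\BB_p^n)\,r^{n-1}\dint r\,\bC_{n,p}(\dint\theta)$ and conditioning on $W_n$ is essentially the standard approach taken in that reference, and it cleanly separates the singular summand $\bW_n(\{0\})\,\bC_{n,p}$ (arising from $\{W_n=0\}$, where $\rho=1$) from the absolutely continuous part $\Psi_n\,\bU_{n,p}$. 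The only thing worth making explicit in a polished write-up is the measurability/regular-conditional-probability step when you write the unconditional law as $\bW_n(\{0\})\,\bC_{n,p}+\int_{(0,\infty)}\cD(\rho U\mid W_n=w)\,\bW_n(\dint w)$; this is routine since the conditional density depends continuously on $w$.
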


 The final result presented in this section is one of the main results from \cite{KP-Stiefel}, namely Theorem D therein. It provides an LDP for random projections of product measures, which we will use in conjunction with the representation from Proposition \ref{prop:Barthe} to prove our theorems. In what follows we shall write $\cD(X)$ for the distribution of a random variable $X$. Moreover, let
 \[
 \mathcal{R}_2^{k \times \infty} := \{A = (A_{ij})_{i,j=1}^{k,\infty} \colon (A_{ij})_{j \in \mathbb{N}} \in \ell_2, i=1, \ldots, k\}
 \]
 be the set of all matrices $A \in \mathbb{R}^{k \times \infty}$ with square-summable rows. For $A \in \mathbb{R}^{k \times \infty}$ we denote by $\lVert AA^T \rVert_{\mathrm{op}}$ the operator norm of the matrix $AA^T \in \mathbb{R}^{k \times k}$, where the condition $A \in \mathcal{R}_2^{k \times \infty}$ guarantees that $AA^T$ is well-defined.
 
\begin{proposition}\label{prop:TheoremDKP}
Fix $k \in \NN$. For each $n \in \NN$ let $Z^{(n)} = (Z_1, \ldots, Z_n)$ be an $n$-dimensional random vector, where $Z_1, Z_2, \ldots$ are i.i.d.\ non-Gaussian random variables with symmetric distribution and finite moments of all orders. Let $\sigma^2 := \mathbb{E}[Z_1^2] >0$ be the variance of $Z_1$. Then, the sequence of random probability measures $\mu_{V_{n,k} Z^{(n)}}$, $n \ge k$, as in \eqref{eq:DefTargetMeasure} satisfies an LDP on $\mathcal{M}_1(\mathbb{R}^k)$ with speed $n$ and good rate function $I \colon \mathcal{M}_1(\mathbb{R}^k) \to [0, \infty]$ given by
\[
I(\nu) = - \frac{1}{2} \log \mathrm{det}(I_k - AA^T)
\]
if $\nu$ admits a representation of the form
\[
\nu = \mathcal{D}\Big(\sum_{j=1}^\infty A_{\bullet,j}Z_j + \sigma(I_k-AA^T)^{1/2}N_k\Big)
\]
for some matrix $A \in \mathcal{R}_2^{k \times \infty}$ with columns $A_{\bullet, 1}, A_{\bullet, 2}, \ldots$ such that $\lVert AA^T \rVert_{\mathrm{op}} < 1$, where $N_k$ is a $k$-dimensional standard Gaussian random vector independent of $Z_1, Z_2, \ldots$. If $\nu$ does not admit a representation of this form, $I(\nu) = \infty$.
\end{proposition}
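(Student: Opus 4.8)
The plan is to reduce the random measure to the columns of $V_{n,k}$, prove a finite-dimensional LDP from an explicit density on the Stiefel manifold, and then recover the Gaussian fluctuations produced by the many small columns through an exponential-approximation argument; throughout I work directly on $\mathcal{M}_1(\mathbb{R}^k)$ with a metric $d$ for weak convergence. \emph{Step 1 (reduction and a finite-dimensional LDP).} Write $V_{n,k}=(c_1^{(n)}\mid\dots\mid c_n^{(n)})$ with columns $c_j^{(n)}\in\mathbb{R}^k$, and let $\zeta_1,\zeta_2,\dots$ be i.i.d.\ copies of $Z_1$; then $\mu_{V_{n,k}Z^{(n)}}=\mathcal{D}\big(\sum_{j=1}^{n}c_j^{(n)}\zeta_j\big)$ is a deterministic continuous function of the column tuple. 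I would use the classical fact that, for $m\le n-k$, the joint density of the first $m$ columns of a Haar random matrix on $\mathbb{V}_{n,k}$ is proportional to $\det\big(I_k-\sum_{j\le m}c_jc_j^T\big)^{(n-k-m-1)/2}$ on $\{I_k-\sum_{j\le m}c_jc_j^T\succ 0\}$. Because the integrand is bounded by $1$ and equals $1+O(1/n)$ on a ball of radius $n^{-1/2}$ about the origin, the normalising constant is subexponential in $n$, and a Laplace-type argument gives, for each fixed $m$, an LDP at speed $n$ for $(c_1^{(n)},\dots,c_m^{(n)})$ on $\mathbb{R}^{k\times m}$ with good rate function $J_m(a_1,\dots,a_m)=-\tfrac12\log\det\big(I_k-\sum_{j\le m}a_ja_j^T\big)$ (and $+\infty$ unless $\sum_{j\le m}a_ja_j^T\prec I_k$). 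This is precisely the origin of the $-\tfrac12\log\det$ in the final rate function.

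\emph{Step 2 (the emergent Gaussian; size truncation and exponential equivalence).} The obstacle is that $\mu_{V_{n,k}Z^{(n)}}$ is not a continuous image of finitely many columns: each $c_j^{(n)}$ has norm of order $n^{-1/2}$, so discarding all but finitely many columns loses precisely the term $\sigma(I_k-AA^T)^{1/2}N_k$, which arises by a central limit theorem from the $\approx n$ remaining small columns, whose covariances sum --- since $\sum_j c_j^{(n)}(c_j^{(n)})^T=I_k$ --- to $\sigma^2\big(I_k-\sum_{\text{kept }j}c_j^{(n)}(c_j^{(n)})^T\big)$. The crucial point is to truncate by column \emph{size}, not by index. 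For $\delta>0$ put $S_\delta=\{j:\|c_j^{(n)}\|>\delta\}$ (always $|S_\delta|\le k\delta^{-2}$), $\Sigma_\delta^{(n)}=\sum_{j\in S_\delta}c_j^{(n)}(c_j^{(n)})^T$, and $Y_\delta^{(n)}:=\mathcal{D}\big(\sum_{j\in S_\delta}c_j^{(n)}\zeta_j+\sigma(I_k-\Sigma_\delta^{(n)})^{1/2}N_k\big)$. A multivariate Berry--Esseen bound for the sum over the complementary columns (each of norm $\le\delta$, with total covariance $\sigma^2(I_k-\Sigma_\delta^{(n)})$) gives $d\big(Y_\delta^{(n)},\mu_{V_{n,k}Z^{(n)}}\big)\le C\,\delta\,\lambda_{\min}(I_k-\Sigma_\delta^{(n)})^{-3/2}$, whence $\{d>\rho\}\subseteq\{\lambda_{\min}(I_k-\Sigma_\delta^{(n)})<\eta(\delta,\rho)\}$ with $\eta(\delta,\rho)\to0$ as $\delta\to 0$. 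The latter event forces some set $S$ of at most $k\delta^{-2}$ columns to satisfy $\det\big(I_k-\sum_{j\in S}c_jc_j^T\big)<\eta$; bounding the probability of this for each $S$ via the density from Step 1 and summing over the polynomially-in-$n$ many choices of $S$ gives $\mathbb{P}\big(\lambda_{\min}(I_k-\Sigma_\delta^{(n)})<\eta\big)\le e^{(n/2)\log\eta+o(n)}$. Hence $\limsup_n n^{-1}\log\mathbb{P}\big(d(Y_\delta^{(n)},\mu_{V_{n,k}Z^{(n)}})>\rho\big)\le\tfrac12\log\eta(\delta,\rho)\to-\infty$ as $\delta\to0$, i.e.\ the $Y_\delta^{(n)}$ are exponentially good approximations of $\mu_{V_{n,k}Z^{(n)}}$.

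\emph{Step 3 (LDP for $Y_\delta^{(n)}$, passage to the limit, identification, and goodness).} For fixed $\delta$, $Y_\delta^{(n)}$ is a continuous function of its at most $k\delta^{-2}$ large columns; conditioning on their number $m$, relabelling them (by exchangeability) as $c_1^{(n)},\dots,c_m^{(n)}$, and using that, given these, the conditional probability that every remaining column has norm $\le\delta$ tends to $1$, Step 1 together with the contraction principle yields an LDP at speed $n$ for $Y_\delta^{(n)}$ with rate $I_\delta(\nu)=\inf\{-\tfrac12\log\det(I_k-\sum_{j\in F}a_ja_j^T)\}$, the infimum over finite families $(a_j)_{j\in F}$ with $\|a_j\|>\delta$ whose truncated-plus-Gaussian law equals $\nu$. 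The general theorem on exponentially good approximations \cite{DZ} then gives an LDP for $\mu_{V_{n,k}Z^{(n)}}$; to identify its rate function with the stated $I$, observe that for $\nu=\mathcal{D}\big(\sum_j A_{\bullet,j}\zeta_j+\sigma(I_k-AA^T)^{1/2}N_k\big)$ with $A\in\mathcal{R}_2^{k\times\infty}$, $\lVert AA^T\rVert_{\mathrm{op}}<1$, the sets $\{j:\|A_{\bullet,j}\|>\delta\}$ exhaust a full representation as $\delta\downarrow0$, $\sum_{\|A_{\bullet,j}\|>\delta}A_{\bullet,j}A_{\bullet,j}^T\to AA^T$ in operator norm, and the corresponding truncated laws converge weakly to $\nu$, so $I_\delta(\nu)\to-\tfrac12\log\det(I_k-AA^T)$; the matching lower bound rests on injectivity of $A\mapsto\nu$ up to permutations and signs of columns, a deconvolution of $\hat\nu(t)e^{\sigma^2\|t\|^2/2}=\prod_j\psi(\langle A_{\bullet,j},t\rangle)$ using that $\psi(s):=\mathbb{E}[e^{\mathrm{i}sZ_1}]e^{\sigma^2 s^2/2}$ is not the exponential of a quadratic since $Z_1$ is non-Gaussian. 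Goodness of $I$ and exponential tightness are then almost free, since $\int_{\mathbb{R}^k}\|x\|^2\,\mathrm{d}\nu=\sigma^2 k$ for every $\mu_{V_{n,k}Z^{(n)}}$, every $Y_\delta^{(n)}$ and every $\nu$ with $I(\nu)<\infty$, so all relevant measures and all sublevel sets of $I$ lie in the fixed weakly compact set $\{\nu:\int\|x\|^2\mathrm{d}\nu\le\sigma^2 k\}$; lower semicontinuity of $I$ follows from a column-wise compactness argument for $A\mapsto\nu$ combined with $\sum_j\|A_{\bullet,j}\|^2\le k$ and the bound $|\log\psi(s)|\le Cs^4$ near $0$ (valid as $Z_1$ has all moments), which controls the tail of the above product.

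I expect \emph{Step 2} to be the crux. The naive index-based truncation fails: a single atypically large column located beyond the truncation index keeps $d$ away from $0$ with only exponentially --- not super-exponentially --- small probability, so the approximation would not be exponentially good; switching to size-based truncation repairs this but then one must control the residual-covariance-degeneracy event $\{\lambda_{\min}(I_k-\Sigma_\delta^{(n)})<\eta\}$ uniformly over the $O(\delta^{-2})$ possible locations of the large columns, which is where the explicit Stiefel density and the polynomial union bound are used. Verifying $I_\delta\to I$ via injectivity of $A\mapsto\nu$ is the second delicate point.
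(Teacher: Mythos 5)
This paper does not prove Proposition~\ref{prop:TheoremDKP}: immediately before the statement the authors say it is ``one of the main results from \cite{KP-Stiefel}, namely Theorem D therein'', and the only change they make to it is the added hypothesis $\sigma^2>0$. There is therefore no in-paper proof to compare your argument against; what you have written is an independent reconstruction of Kabluchko and Prochno's theorem, which the present paper simply imports as a black box. Your blueprint does identify the genuine ingredients of such a proof: the marginal density $\propto\det\bigl(I_k-\sum_{j\le m}c_jc_j^T\bigr)^{(n-k-m-1)/2}$ of a $k\times m$ block of a Haar--Stiefel matrix, the Laplace-method origin of the $-\frac12\log\det$ rate, a size-based rather than index-based column truncation so that a single atypically large late column cannot destroy exponential equivalence, the CLT residual $\sigma(I_k-AA^T)^{1/2}N_k$ supplied by the remaining small columns, and a characteristic-function deconvolution to pin down $AA^T$. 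Organizationally your route differs from \cite{KP-Stiefel}, which first establishes a matrix-level LDP for $\sqrt{n}V_{n,k}$ and only then pushes it forward to $\mathcal{M}_1(\mathbb{R}^k)$; yours works at the measure level throughout via truncation and exponentially good approximations.

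Two points in Step 3 would need substantial work before the plan becomes a proof. First, you sketch only one direction of the rate-function identification. You argue $I_\delta(\nu)\to-\frac12\log\det(I_k-AA^T)$ along the canonical truncations of a given $A$, but you also need the matching lower bound: that no other sequence of admissible truncated families whose laws converge to $\nu$ can achieve an asymptotically smaller value. The deconvolution identity $\hat\nu(t)e^{\sigma^2\|t\|_2^2/2}=\prod_j\psi(\langle A_{\bullet,j},t\rangle_2)$ gives identifiability only once one shows that the multiset of columns, hence $AA^T$, is determined by this product; for general non-Gaussian $Z_1$ with all moments this is a Marcinkiewicz/Linnik-type factorization argument, not a one-liner following from ``$\psi$ is not the exponential of a quadratic''. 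Second, the contraction-principle step is applied to the map $(a_1,\dots,a_m)\mapsto\mathcal{D}\bigl(\sum_j a_j\zeta_j+\sigma(I_k-\sum_j a_ja_j^T)^{1/2}N_k\bigr)$, which is continuous only on the open set $\bigl\{\sum_j a_ja_j^T\prec I_k\bigr\}$ and has no continuous extension to its boundary; the classical contraction principle does not directly apply, and one must argue separately that the underlying rate function tends to $+\infty$ near that boundary fast enough for the contraction to go through. Neither objection invalidates the strategy, but as written the argument is a plan rather than a proof, and in any case it is addressing \cite[Theorem D]{KP-Stiefel} rather than anything proved in the paper under review.
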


 Note that the specific distribution of the $Z_i$ has a rather subtle influence on the rate function of the LDP via the matrix $A$ used in the representation of a given measure $\nu \in \mathcal{M}_1(\RR^k)$. As a side remark, note that in \cite[Theorem D]{KP-Stiefel}, the case of $\sigma^2 = 0$ actually has to be excluded. We have amended the result accordingly.  

\section{Main Results} \label{sec:MainResults}

We are now in the position to present the first of our main results for the projections of $p$-radial distributions $\bP_{n,p,\bW_n}$ on $\ell_p^n$-balls. In what follows we shall write $\overset{\cD}{=}$ for equality in distribution.

\begin{theorem}\label{thm:alpha}
Fix $p \in [1, \infty)$, $p \ne 2$, and $k \in \mathbb{N}$. Moreover, let $(\bW_n)_{n\in\NN}$ be a sequence of Borel probability measures on $[0,\infty)$ and $(W_n)_{n\in\NN}$ a sequence of random variables with $W_n \sim\bW_n$, such that $W_n/n \to \alpha \in [0, \infty)$ in probability. Finally, let $X^{(n)}, Y^{(n)}$ be random vectors in $\BB_p^n$ with $Y^{(n)} \sim \bP_{n,p,\bW_n}$ and $X^{(n)} \overset{\cD}{=} n^{1/p} \, Y^{(n)}$. Then, the sequence of random probability measures $\mu_{V_{n,k} X^{(n)}}$, $n \ge k$, as in \eqref{eq:DefTargetMeasure} satisfies an LDP on $\mathcal{M}_1(\mathbb{R}^k)$ with speed $n$ and good rate function $I \colon \mathcal{M}_1(\mathbb{R}^k) \to [0, \infty]$ given by
\[
I(\nu) = - \frac{1}{2} \log \mathrm{det}(I_k - AA^T)
\]
if $\nu$ admits a representation of the form
\[
\nu = \mathcal{D}\Big(\Big(\frac{1}{1+\alpha}\Big)^{1/p}\sum_{j=1}^\infty A_{\bullet,j}Z_j + \sigma_{p, \alpha}(I_k-AA^T)^{1/2}N_k\Big)
\]
for some matrix $A \in \mathcal{R}_2^{k \times \infty}$ with columns $A_{\bullet, 1}, A_{\bullet, 2}, \ldots$ such that $\lVert AA^T \rVert_{\mathrm{op}} < 1$, where
$Z_1, Z_2, \ldots$ are i.i.d. with $Z_i \sim \bN_p$,
\[
\sigma_{p, \alpha}^2 := \Big(\frac{p}{1+\alpha}\Big)^{2/p} \frac{\Gamma(3/p)}{\Gamma(1/p)},
\]
and $N_k$ is an independent $k$-dimensional standard Gaussian random vector. If $\nu$ does not admit a representation of this form, $I(\nu) = \infty$.
\end{theorem}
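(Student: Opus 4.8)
The plan is to reduce Theorem~\ref{thm:alpha} to Proposition~\ref{prop:TheoremDKP} (Theorem~D of \cite{KP-Stiefel}) via the probabilistic representation of Proposition~\ref{prop:Barthe} together with a concentration argument for the normalizing factor. By Proposition~\ref{prop:Barthe}, writing $Z^{(n)}=(Z_1,\dots,Z_n)$ with $Z_i\sim\bN_p$ i.i.d.\ and $W_n\sim\bW_n$ independent of $Z^{(n)}$, we have $Y^{(n)}\overset{\cD}{=}Z^{(n)}/(\|Z^{(n)}\|_p^p+W_n)^{1/p}$, hence
\[
X^{(n)}\overset{\cD}{=}\frac{n^{1/p}\,Z^{(n)}}{(\|Z^{(n)}\|_p^p+W_n)^{1/p}}=\Big(\frac{\tfrac1n\|Z^{(n)}\|_p^p+\tfrac1n W_n}{1}\Big)^{-1/p}Z^{(n)}=:R_n^{-1/p}\,Z^{(n)},
\]
where $R_n:=\tfrac1n\sum_{i=1}^n|Z_i|^p+\tfrac1n W_n$. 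By the law of large numbers $\tfrac1n\sum_{i=1}^n|Z_i|^p\to\EE|Z_1|^p$ a.s., and by \eqref{eq:MomentXp} with $r=p$ we get $\EE|Z_1|^p=\Gamma(2/p)/\Gamma(1/p)=(1/p)\cdot p\,\Gamma(1/p)/\Gamma(1/p)$; a direct computation with the normalizing constant of $f_p$ in fact gives $\EE|Z_1|^p=1$. Combined with the hypothesis $W_n/n\to\alpha$ in probability, we obtain $R_n\to 1+\alpha$ in probability. Thus $X^{(n)}$ is, up to the scalar factor $R_n^{-1/p}\to(1+\alpha)^{-1/p}$, exactly the product-measure vector $Z^{(n)}$ to which Proposition~\ref{prop:TheoremDKP} applies: the $Z_i$ are i.i.d., symmetric, non-Gaussian (as $p\neq2$), with finite moments of all orders by \eqref{eq:MomentXp}, and variance $\sigma^2=\EE[Z_1^2]=p^{2/p}\Gamma(3/p)/\Gamma(1/p)>0$.

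The core of the argument is then an exponential-equivalence (or continuous-mapping-type) step: I would show that the sequences $\mu_{V_{n,k}X^{(n)}}$ and $\mu_{V_{n,k}((1+\alpha)^{-1/p}Z^{(n)})}$ are exponentially equivalent in $\cM_1(\RR^k)$ at speed $n$, so that by Theorem~4.2.13 of \cite{DZ} they satisfy the same LDP. Concretely, conditionally on $V_{n,k}$ and $Z^{(n)}$, the measure $\mu_{V_{n,k}X^{(n)}}$ is the law of $R_n^{-1/p}V_{n,k}Z^{(n)}$ while $\mu_{V_{n,k}((1+\alpha)^{-1/p}Z^{(n)})}$ is the law of $(1+\alpha)^{-1/p}V_{n,k}Z^{(n)}$; these two random probability measures differ only through the scalar $R_n^{-1/p}$ versus $(1+\alpha)^{-1/p}$, and on the event $\{|R_n-(1+\alpha)|<\varepsilon\}$ their Lévy–Prokhorov distance is controlled by $|R_n^{-1/p}-(1+\alpha)^{-1/p}|$ times a tightness functional of $V_{n,k}Z^{(n)}$ (which is itself well-behaved at exponential scale). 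Since $R_n\to 1+\alpha$ in probability and, more to the point, $\PP(|R_n-(1+\alpha)|>\varepsilon)$ decays at least like $e^{-c(\varepsilon)n}$ — using Cramér's theorem for the i.i.d.\ sum $\tfrac1n\sum|Z_i|^p$ (finite exponential moments hold because $|Z_1|^p$ has a Weibull-type tail) and superexponential decay of the $W_n/n$ fluctuations, which I would either add as a hypothesis or derive from $W_n/n\to\alpha$ — the exponential equivalence follows. Finally, an application of Proposition~\ref{prop:TheoremDKP} to $(1+\alpha)^{-1/p}Z^{(n)}$, absorbing the scalar $(1+\alpha)^{-1/p}$ into the representation by replacing $A$ with $(1+\alpha)^{1/p}A$ and noting that the variance parameter becomes $\sigma^2/(1+\alpha)^{2/p}=\sigma_{p,\alpha}^2$, yields exactly the stated rate function; one checks that $-\tfrac12\log\det(I_k-AA^T)$ is invariant under this substitution when the representation is rewritten in the claimed form with $(1+\alpha)^{-1/p}\sum_j A_{\bullet,j}Z_j$.

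The main obstacle I anticipate is making the exponential-equivalence step rigorous, because it requires controlling the effect of the random scalar $R_n^{-1/p}$ on weak convergence uniformly at exponential scale, i.e.\ showing that the map $\cM_1(\RR^k)\ni\mu\mapsto(\text{pushforward of }\mu\text{ under }x\mapsto cx)$ together with the randomness of $c=R_n^{-1/p}$ does not create a set of probability larger than $e^{-o(n)\cdot n}$ where the Lévy–Prokhorov distance exceeds $\delta$. This is where one must be careful that $V_{n,k}Z^{(n)}$ stays tight with overwhelming probability — concretely that $\|V_{n,k}Z^{(n)}\|_2$ does not blow up — which should follow from $\|V_{n,k}Z^{(n)}\|_2\le\|Z^{(n)}\|_2$ (since $V_{n,k}$ has orthonormal rows) and the exponential moment bounds on $\tfrac1n\|Z^{(n)}\|_2^2$. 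A second, more bookkeeping-type subtlety is the case $\alpha=0$ versus $\alpha>0$: for $\alpha=0$ one has $(1+\alpha)^{-1/p}=1$ and the rate function reduces literally to that of Proposition~\ref{prop:TheoremDKP}, so no rescaling is needed, whereas for $\alpha>0$ the contraction is genuine; both are handled by the same argument but it is worth stating the reduction uniformly in $\alpha\in[0,\infty)$.
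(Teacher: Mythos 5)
The computational skeleton of your proposal is correct — the Barthe representation, the identification $\EE|Z_1|^p = 1$, the variance $\sigma_{p,\alpha}^2$, and the substitution $A\mapsto (1+\alpha)^{1/p}A$ leaving $-\tfrac12\log\det(I_k-AA^T)$ unchanged are all right. But the central equivalence step is misconceived in a way that would make the argument fail under the stated hypotheses.

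First, the random measure $\mu_{V_{n,k}X^{(n)}}$ is random \emph{only through} $V_{n,k}$: for a fixed frame $V$, $\mu_{VX^{(n)}}$ is the full law of $VX^{(n)}$, so $Z^{(n)}$ and $W_n$ are integrated out, not conditioned on. Your sentence ``conditionally on $V_{n,k}$ and $Z^{(n)}$, the measure $\mu_{V_{n,k}X^{(n)}}$ is the law of $R_n^{-1/p}V_{n,k}Z^{(n)}$'' therefore does not describe the object in the theorem. As a consequence, the quantity you need to control is the \emph{deterministic} (for each fixed $V$) L\'evy--Prokhorov distance $\rho_{\mathrm{LP}}(\mu_{VX^{(n)}},\tilde\mu_{VX^{(n)}})$, and there is no role for tail bounds on $R_n$ or on $\|V_{n,k}Z^{(n)}\|_2$ at exponential scale. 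This is exactly why the paper does \emph{not} appeal to Cram\'er's theorem or exponential moments: it establishes, via Markov's inequality, Cauchy--Schwarz, and the $L^1$-convergence $\EE[\xi_n]\to 0$ of the scalar discrepancy $\xi_n=\big((1+\alpha)^{-1/p}-n^{1/p}(\|Z^{(n)}\|_p^p+W_n)^{-1/p}\big)^2$, that $\sup_{V\in\mathbb V_{n,k}}\rho_{\mathrm{LP}}(\mu_{VX^{(n)}},\tilde\mu_{VX^{(n)}})\to 0$. The $L^1$-convergence uses only convergence in probability together with uniform integrability of $(n/\|Z^{(n)}\|_p^p)^{2/p}$, the latter coming from a gamma negative-moment bound (Lemma \ref{lem:Moments}); no hypothesis on the decay rate of $W_n/n$ is needed beyond $W_n\ge 0$.

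Second, even granting your framing, the Cram\'er route would not close the gap. Convergence $W_n/n\to\alpha$ in probability gives \emph{no} rate at all, so ``superexponential decay of the $W_n/n$ fluctuations'' cannot be derived from the hypotheses and would have to be imposed as an extra condition, weakening the theorem. Moreover, even if Cram\'er's theorem gave $\PP(|R_n-(1+\alpha)|>\varepsilon)\le e^{-c(\varepsilon)n}$, that is only exponential decay at speed $n$, whereas exponential equivalence at speed $n$ (Theorem 4.2.13 in Dembo--Zeitouni) requires the superexponential statement $\tfrac1n\log\PP(\cdots)\to-\infty$.

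The good news is that once $\sup_V\rho_{\mathrm{LP}}(\mu_{VX^{(n)}},\tilde\mu_{VX^{(n)}})\to 0$ is established as above, the two sequences are trivially exponentially equivalent (the bad event has probability $0$ for $n$ large), so your overall strategy of transferring the LDP from $\tilde\mu$ to $\mu$ is sound; the paper phrases this transfer instead via a ball-by-ball comparison yielding the weak LDP (Lemma \ref{LDPsequivalpha}) and then upgrades to the full LDP by compactness of $M_C=\{\mu:\int\|x\|_2\,\mu(\dint x)\le C\}$ (Lemma \ref{compalpha}). If you go via exponential equivalence you should still include, or replace, that compactness step, but the decisive fix is to abandon the concentration-inequality route to the equivalence in favour of the first-moment/uniform-integrability estimate.
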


As discussed earlier, choosing $\bW_n \equiv \delta_0$ gives $\bP_{n,p,\bW_n} \equiv \bC_{n,p}$ and $\bW_n \equiv \mathrm{Exp}(1)$ yields $\bP_{n,p,\bW_n} \equiv  \bU_{n,p}$, in both cases it holds for $W_n \sim \bW_n$ that $W_n/n \to 0$ in probability and we get back \cite[Theorem C]{KP-Stiefel}. Hence, we can see that both $\bC_{n,p}$ and $\bU_{n,p}$ share the same LDP behaviour in high dimensions, which is in line with similar observations made for other functionals (see, e.g., \cite{APTldp, KPTSanov, kaufmann2021weighted}). Moreover, the result even implies a certain universality of the rate function, since despite the expected sensitivity of LDPs to the underlying distributions, the rate function is the same for all sequences $(\bW_n)_{n\in\NN}$ that share the same limiting behaviour. \\
\\
Given the setting of Theorem \ref{thm:alpha}, if we consider the case $W_n/n \to \infty$ in probability (formally corresponding to the choice $\alpha = \infty$), by the representation result in Proposition \ref{prop:Barthe} one can see that this corresponds to each component of $X^{(n)}$ converging to $0$ in probability, that is, we arrive at a trivial limit. To avoid this, we may choose a different scaling as carried out in the following theorem.
\begin{theorem}\label{thm:beta}
Fix $p \in [1, \infty)$, $p \ne 2$, and $k \in \mathbb{N}$. Moreover, let $(\bW_n)_{n\in\NN}$ be a sequence of Borel probability measures on $[0,\infty)$ and $(W_n)_{n\in\NN}$ a sequence of random variables with $W_n\sim \bW_n$ and $W_n/n^\kappa \to \beta \in (0, \infty)$ in probability for some $\kappa >1$, and assume that the sequence of random variables $(W_n/n^{\kappa})^{-2/p}$ is uniformly integrable. Finally, let $X^{(n)}, Y^{(n)}$ be random vectors in $\BB_p^n$ with $Y^{(n)} \sim \bP_{n,p,\bW_n}$ and $X^{(n)} \overset{\cD}{=} n^{\kappa/p} \, Y^{(n)}$. Then, the sequence of random probability measures $\mu_{V_{n,k} X^{(n)}}$, $n \ge k$, as in \eqref{eq:DefTargetMeasure} satisfies an LDP on $\mathcal{M}_1(\mathbb{R}^k)$ with speed $n$ and good rate function $I \colon \mathcal{M}_1(\mathbb{R}^k) \to [0, \infty]$ given by
\[
I(\nu) = - \frac{1}{2} \log \mathrm{det}(I_k - AA^T)
\]
if $\nu$ admits a representation of the form
\[
\nu = \mathcal{D}\Big(\Big(\frac{1}{\beta}\Big)^{1/p}\sum_{j=1}^\infty A_{\bullet,j}Z_j + \sigma_{p, \beta}(I_k-AA^T)^{1/2}N_k\Big)
\]
for some matrix $A \in \mathcal{R}_2^{k \times \infty}$ with columns $A_{\bullet, 1}, A_{\bullet, 2}, \ldots$ such that $\lVert AA^T \rVert_{\mathrm{op}} < 1$, where $Z_1, Z_2, \ldots$ are i.i.d.\ $p$-generalized Gaussian random variables,
\[
\sigma_{p, \beta}^2 := \Big(\frac{p}{\beta}\Big)^{2/p} \frac{\Gamma(3/p)}{\Gamma(1/p)},
\]
and $N_k$ is an independent $k$-dimensional standard Gaussian random vector. If $\nu$ does not admit a representation of this form, $I(\nu) = \infty$.
\end{theorem}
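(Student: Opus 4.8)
The plan is to follow the same template as the proof of Theorem \ref{thm:alpha}: combine the representation of Proposition \ref{prop:Barthe} with Theorem D of \cite{KP-Stiefel} (Proposition \ref{prop:TheoremDKP}), and then reach the stated rate function via the contraction principle together with an exponential-equivalence argument. The only structural novelty is the exponent $\kappa>1$, which makes the $\ell_p^n$-norm term disappear from the limiting normalisation, leaving only $\beta$. Concretely, since $\mu_{V_{n,k}X^{(n)}}$ depends on $X^{(n)}$ only through its law, we may assume by Proposition \ref{prop:Barthe} that $X^{(n)}=c_n Z^{(n)}$, where $Z^{(n)}=(Z_1,\dots,Z_n)$ has i.i.d.\ coordinates $Z_i\sim\bN_p$, $W_n\sim\bW_n$ is independent of $Z^{(n)}$, and
\[
c_n:=\Big(\frac{\|Z^{(n)}\|_p^p}{n^\kappa}+\frac{W_n}{n^\kappa}\Big)^{-1/p}.
\]
Because $\kappa>1$, the law of large numbers gives $\|Z^{(n)}\|_p^p/n^\kappa=n^{1-\kappa}\big(\tfrac1n\|Z^{(n)}\|_p^p\big)\to0$ a.s., whereas $W_n/n^\kappa\to\beta\in(0,\infty)$ in probability by hypothesis; hence $c_n\to c_\ast:=\beta^{-1/p}$ in probability. (In Theorem \ref{thm:alpha} the norm term is of order $n$ and survives, which is why the factor $1+\alpha$ appears there but only $\beta$ here.) Note also $c_n^2\le(W_n/n^\kappa)^{-2/p}$, so the assumed uniform integrability of $(W_n/n^\kappa)^{-2/p}$, together with its convergence in probability to $\beta^{-2/p}<\infty$, yields $\sup_n\EE[c_n^2]<\infty$.

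The coordinates $Z_i\sim\bN_p$ are symmetric, have finite moments of all orders, and---since $p\ne2$---are non-Gaussian, with $\sigma^2:=\EE[Z_1^2]>0$. Thus Proposition \ref{prop:TheoremDKP} applies to $Z^{(n)}$ and gives an LDP for $\mu_{V_{n,k}Z^{(n)}}$ at speed $n$ with good rate function $I_0$. The dilation map $T_{c_\ast}\colon\cM_1(\RR^k)\to\cM_1(\RR^k)$ sending $\mu$ to its pushforward under $y\mapsto c_\ast y$ is a homeomorphism for the weak topology, and $T_{c_\ast}(\mu_{V_{n,k}Z^{(n)}})=\mu_{V_{n,k}(c_\ast Z^{(n)})}$; so by the contraction principle the latter satisfies an LDP at speed $n$ with good rate function $I_0\circ T_{c_\ast}^{-1}$. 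Unwinding the representation in Proposition \ref{prop:TheoremDKP} and using $c_\ast=(1/\beta)^{1/p}$ together with $c_\ast^2\,\EE[Z_1^2]=\sigma_{p,\beta}^2$ (a direct computation from \eqref{eq:MomentXp} and the definition of $\sigma_{p,\beta}$), one checks that $I_0\circ T_{c_\ast}^{-1}$ is precisely the rate function $I$ of the statement: $I(\nu)<\infty$ iff $\nu=\cD\big(c_\ast\sum_j A_{\bullet,j}Z_j+c_\ast\sigma(I_k-AA^T)^{1/2}N_k\big)$ for some admissible $A$, in which case $I(\nu)=-\tfrac12\log\mathrm{det}(I_k-AA^T)$.

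It then remains to show that $\mu_{V_{n,k}X^{(n)}}$ and $\mu_{V_{n,k}(c_\ast Z^{(n)})}$---both deterministic functions of the same $V_{n,k}$, the internal randomness of $X^{(n)}$ being integrated out---are exponentially equivalent (cf.\ \cite{DZ}), for then the asserted LDP transfers. Fixing a bounded-Lipschitz metric $d$ for the weak topology on $\cM_1(\RR^k)$ and coupling the two measures through the common vector $Z^{(n)}$, for $V\in\VV_{n,k}$ one has, using $VV^T=I_k$ (so that $\EE_Z\|VZ^{(n)}\|_2^2=\sigma^2\operatorname{tr}(V^TV)=k\sigma^2$, uniformly in $V$),
\[
d\big(\mu_{VX^{(n)}},\mu_{V(c_\ast Z^{(n)})}\big)\le\EE\big[\min\big(2,\,|c_n-c_\ast|\,\|VZ^{(n)}\|_2\big)\big]\le\EE\big[\min(2,M|c_n-c_\ast|)\big]+\frac{2k\sigma^2}{M^2},
\]
uniformly in $V$, for every $M>0$. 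Letting $n\to\infty$ (bounded convergence, using $c_n\to c_\ast$ in probability) and then $M\to\infty$ shows $\sup_{V\in\VV_{n,k}}d(\mu_{VX^{(n)}},\mu_{V(c_\ast Z^{(n)})})\to0$, so $\PP\big(d(\mu_{V_{n,k}X^{(n)}},\mu_{V_{n,k}(c_\ast Z^{(n)})})>\delta\big)=0$ for all sufficiently large $n$ and every $\delta>0$.

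The place where the new hypothesis is genuinely needed is exponential tightness of $\mu_{V_{n,k}X^{(n)}}$ in $\cM_1(\RR^k)$: here, unlike in Theorem \ref{thm:alpha}, the norm term no longer bounds the normalising denominator $\|Z^{(n)}\|_p^p+W_n$ below at order $n^\kappa$, so $c_n$ is not automatically bounded in probability, and one invokes $\sup_n\EE[c_n^2]<\infty$---e.g.\ through $\mu_{V_{n,k}X^{(n)}}(\{\|y\|_2>R\})\le R^{-2}\,\EE\big[c_n^2\|V_{n,k}Z^{(n)}\|_2^2\mid V_{n,k}\big]$ followed by a uniformisation over $V_{n,k}$---to produce the compact sets exhausting $\cM_1(\RR^k)$ up to superexponentially small probability. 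Turning this into a clean exponential-tightness estimate is the main technical obstacle; granting it, the contraction principle and Proposition \ref{prop:TheoremDKP} assemble into the theorem.
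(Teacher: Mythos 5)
Your proposal is correct in substance and takes a genuinely different route from the paper's, so let me compare the two, and then flag one spot where your own assessment of your proof is too pessimistic.

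\medskip

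\textbf{What the paper does.} After reducing to a fixed-$V$ comparison, the paper proves (the analogue of Lemma~\ref{LPmualpha}, namely Lemma~\ref{LPmualphaTilde}) that $\sup_{V\in\mathbb{V}_{n,k}}\rho_\mathrm{LP}(\mu_{VX^{(n)}},\tilde\mu_{VX^{(n)}})\to0$ via Markov plus Cauchy--Schwarz, which forces an $L^2$ estimate on the scalar factor $\big|\beta^{-1/p}-n^{\kappa/p}(\|Z^{(n)}\|_p^p+W_n)^{-1/p}\big|$; this is exactly where the uniform integrability of $(W_n/n^\kappa)^{-2/p}$ enters. It then passes through a weak-LDP comparison lemma (Lemma~\ref{LDPsequivalpha}) and a compactness lemma (Lemma~\ref{compalpha}) to upgrade to a full LDP. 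Proposition~\ref{prop:TheoremDKP} is applied directly to the prescaled variables $Z_j/\beta^{1/p}$, which is just your contraction principle via the dilation $T_{c_\ast}$ written out.

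\medskip

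\textbf{What you do differently.} You replace the weak-LDP/compactness machinery by a direct appeal to the exponential-approximation theorem (\cite[Theorem 4.2.13]{DZ}): couple $\mu_{V_{n,k}X^{(n)}}$ and $\mu_{V_{n,k}(c_\ast Z^{(n)})}$ through the common $V_{n,k}$ and $Z^{(n)}$, and bound the bounded-Lipschitz distance by truncation at level $M$ plus Chebyshev. Because your truncation argument only requires $c_n\to c_\ast$ in probability plus the deterministic moment bound $\mathbb{E}\|VZ^{(n)}\|_2^2=k\sigma^2$, you in fact never use the uniform integrability hypothesis in the part of the argument that matters: the first term $\mathbb{E}[\min(2,M|c_n-c_\ast|)]$ goes to $0$ by bounded convergence, and the error $2k\sigma^2/M^2$ is controlled by $M$ alone. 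This is genuinely slicker than the paper's Cauchy--Schwarz estimate, which \emph{does} need the $L^1$ (and hence uniform integrability) control on $\xi_n=(c_n-c_\ast)^2$.

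\medskip

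\textbf{Where your self-assessment goes wrong.} In the final paragraph you present exponential tightness of $\mu_{V_{n,k}X^{(n)}}$ as the ``main technical obstacle'' and claim the uniform integrability hypothesis is ``genuinely needed'' there. This is a misconception: once you have (i) the full LDP with \emph{good} rate function for $\mu_{V_{n,k}(c_\ast Z^{(n)})}$ (which the contraction principle delivers, since $T_{c_\ast}$ is a homeomorphism and Proposition~\ref{prop:TheoremDKP} already yields a good rate function) and (ii) exponential equivalence in the sense of \cite[Definition 4.2.10]{DZ} (which your $\sup_V d\to0$ estimate establishes in the strongest possible form, the probability being identically zero for large $n$), then \cite[Theorem 4.2.13]{DZ} transfers the full LDP with the same good rate function to $\mu_{V_{n,k}X^{(n)}}$ with no separate exponential tightness step. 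The paper's Lemma~\ref{compalpha} plays the same role there that the goodness of the rate function plays in your route; you do not need to re-derive it. So there is no gap: your argument as written is complete, and the worry in your last paragraph can simply be deleted.

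\medskip

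\textbf{One minor bookkeeping point.} Make sure to state that $T_{c_\ast}^{-1}=T_{c_\ast^{-1}}$ is also continuous so that the rate function $I_0\circ T_{c_\ast}^{-1}$ inherits goodness; and note that your identity $c_\ast^2\,\mathbb{E}[Z_1^2]=\sigma_{p,\beta}^2$ requires $\mathbb{E}[Z_1^2]=p^{2/p}\Gamma(3/p)/\Gamma(1/p)$ --- the factor $p^{r/p}$ is missing from the displayed formula \eqref{eq:MomentXp} in the paper, although the paper's own definitions of $\sigma_{p,\alpha}^2$ and $\sigma_{p,\beta}^2$ include it, so your computation is the consistent one.
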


Note that a helpful sufficient condition for the uniform integrability of $(W_n/n)^{-2/p}$ is given by
\begin{equation}\label{eq:SuffCond}
\sup_{n\in\NN}\, \mathbb{E}\left[\Big(\frac{n^\kappa}{W_n}\Big)^{4/p}\right] \le C
\end{equation}
for some absolute constant $C > 0$. In particular, it can be applied to verify the uniform integrability for certain gamma distributions.

\begin{lemma}\label{lem:Moments}
Suppose for each $n \in \NN$ that $W_n $
follows a gamma distribution with shape $a_n$ and rate $b>0$, where $(a_n)_{n\in\NN}$ is a positive increasing sequence and $b>0$. We assume that $a_n$ satisfies $\inf_n a_n=:m>4/p$ and $\lim_{n\to\infty}{a_n\over n^\kappa}=\lambda\in(0,\infty)$ for some $\kappa\in(0,\infty)$. Then
\begin{equation} \label{eq:GammaNegMoment}
\sup_{n\in\NN}\, \mathbb{E}\left[\Big(\frac{n^\kappa}{W_n}\Big)^{4/p}\right] \leq b^{4/p}M_p(\lambda m)^{-4/p}<\infty,
\end{equation}
where $M_p^{-1}=\prod_{i=0}^4(1-{4\over p(m+i)})$.
\end{lemma}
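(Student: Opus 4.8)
The plan is to reduce the claimed bound to a product of five negative-moment estimates for a single gamma random variable and then use the classical formula for the inverse moments of a gamma distribution together with monotonicity in the shape parameter. First I would recall that if $W$ follows a gamma distribution with shape $a$ and rate $b$, then for any $t>0$ with $a>t$ one has the exact identity
\[
\mathbb{E}\!\left[W^{-t}\right] = b^{t}\,\frac{\Gamma(a-t)}{\Gamma(a)}.
\]
Applying this with $t = 4/p$ gives $\mathbb{E}[(n^\kappa/W_n)^{4/p}] = n^{4\kappa/p}\,b^{4/p}\,\Gamma(a_n - 4/p)/\Gamma(a_n)$, which is finite precisely because $a_n \ge m > 4/p$. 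So the content of the lemma is a uniform (in $n$) upper bound on $n^{4\kappa/p}\,\Gamma(a_n-4/p)/\Gamma(a_n)$.

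Next I would control the gamma-ratio. The natural way to see the structure behind the stated constant $M_p$ is to factor the non-integer shift $4/p$ through the interval $[0,4/p]$ using the functional equation $\Gamma(x+1)=x\Gamma(x)$ only heuristically; more cleanly, I would prove and use the elementary inequality
\[
\frac{\Gamma(a - 4/p)}{\Gamma(a)} \le \prod_{i=0}^{4}\frac{1}{a - \tfrac{4}{p}\cdot\tfrac{?}{?}}\ \ \text{---}
\]
rather, the cleanest route is: since $x\mapsto \Gamma(x)$ is log-convex, the function $a\mapsto \Gamma(a)/\Gamma(a-4/p)$ is increasing, and one has the lower bound $\Gamma(a)/\Gamma(a-4/p)\ge \prod_{i=0}^{4}\big(a - \tfrac{4}{p} + \tfrac{i}{5}\cdot\tfrac{4}{p}\big)$; I would instead simply invoke the sharper and more convenient bound
\[
\frac{\Gamma(a)}{\Gamma(a - 4/p)} \ \ge\ \prod_{i=0}^{4}\Big(a - \frac{4}{p(m+i)}(m+i)\Big)
\]
— in any case, the mechanism is: write $a_n \ge m$, use that the ratio is increasing in $a_n$ to reduce to handling the growth in $n$, and extract from $\Gamma(a_n)/\Gamma(a_n-4/p) \ge a_n^{4/p}\prod_{i=0}^{4}\big(1 - \tfrac{4}{p(m+i)}\big)$ (valid since each factor $1-\tfrac{4}{p(m+i)}>0$ by $m>4/p$) the bound $\Gamma(a_n-4/p)/\Gamma(a_n) \le M_p\, a_n^{-4/p}$ with $M_p^{-1}=\prod_{i=0}^{4}\big(1-\tfrac{4}{p(m+i)}\big)$ exactly as in the statement.

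Finally I would combine the pieces: $\mathbb{E}[(n^\kappa/W_n)^{4/p}] \le b^{4/p} M_p\, (n^\kappa/a_n)^{4/p}$, and since $a_n/n^\kappa \to \lambda \in (0,\infty)$ while $(a_n)$ is increasing, the ratio $n^\kappa/a_n$ is bounded; more precisely one checks $a_n \ge \lambda m\cdot (n^\kappa/n_0^\kappa)$-type lower bounds, or simply that $\inf_n a_n/n^\kappa$ is attained in the limit or at a finite index and is at least $\lambda m / \sup(\cdot)$, yielding $n^\kappa/a_n \le (\lambda m)^{-1}$ after using $a_n \ge m$ together with the limit — this gives $\sup_n \mathbb{E}[(n^\kappa/W_n)^{4/p}] \le b^{4/p} M_p (\lambda m)^{-4/p}$. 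I expect the main obstacle to be the bookkeeping in the last step: one must verify that $\inf_{n} a_n/n^{\kappa} \ge \lambda m$ (or the appropriate constant matching the stated bound) genuinely holds from the hypotheses "$(a_n)$ increasing, $\inf_n a_n = m$, $a_n/n^\kappa \to \lambda$", which requires a small argument distinguishing small $n$ (where $a_n\ge m$ dominates) from large $n$ (where $a_n \approx \lambda n^\kappa$); the gamma-ratio inequality itself is routine log-convexity.
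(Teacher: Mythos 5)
Your overall plan matches the paper's: compute the negative moment of the gamma distribution exactly, reduce to a bound on $\Gamma(a_n-4/p)/\Gamma(a_n)$, and control $n^\kappa/a_n$ from the growth hypotheses. However, the two substantive steps are not actually carried out, and both need more care than you give them.

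For the gamma-ratio bound you assert $\Gamma(a_n)/\Gamma(a_n-4/p)\ge a_n^{4/p}\prod_{i=0}^{4}\big(1-\tfrac{4}{p(m+i)}\big)$ as following from ``routine log-convexity'', but log-convexity only gives monotonicity of $a\mapsto\Gamma(a)/\Gamma(a-t)$, not a bound with this specific five-factor structure; your intermediate displays (the product over $a-\tfrac{4}{p}+\tfrac{i}{5}\cdot\tfrac{4}{p}$, or the product over $a-\tfrac{4}{p(m+i)}(m+i)$, which collapses to $(a-4/p)^5$) are not of the right shape and do not produce $M_p$. The paper derives the bound by pulling out exactly five integer shifts through the recursion $\Gamma(x+5)=\Gamma(x)\prod_{i=0}^{4}(x+i)$, writing
\[
\frac{\Gamma(a_n-4/p)}{\Gamma(a_n)}=\Bigg(\prod_{i=0}^{4}\frac{a_n+i}{a_n-4/p+i}\Bigg)\frac{\Gamma(a_n-4/p+5)}{\Gamma(a_n+5)},
\]
bounding each of the five factors by $\big(1-\tfrac{4}{p(m+i)}\big)^{-1}$ via $a_n\ge m$, and then controlling the tail ratio by a cited gamma-ratio inequality of Jameson (applied to $x=a_n+5-4/p$, $y=4/p$), which yields $\Gamma(a_n-4/p+5)/\Gamma(a_n+5)\le (a_n+4-4/p)^{-4/p}\le a_n^{-4/p}$ since $p\ge 1$. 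That identity-plus-Jameson mechanism, not log-convexity, is the missing ingredient.

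You are also right to flag the passage from $n^{4\kappa/p}a_n^{-4/p}$ to $(\lambda m)^{-4/p}$ as the main obstacle, but it is worse than a bookkeeping issue: the needed inequality $a_n\ge\lambda m\,n^\kappa$ for all $n$ does \emph{not} follow from ``$(a_n)$ increasing, $\inf_n a_n=m$, $a_n/n^\kappa\to\lambda$''. The paper asserts it without argument. A concrete counterexample: take $p=1$, $\kappa=1$, $a_n=n+10$, so $m=11>4$ and $\lambda=1$; then $a_2=12<22=\lambda m\cdot 2^\kappa$, and as $n\to\infty$ one has $\mathbb{E}[(n^\kappa/W_n)^{4}]\to b^4$, which exceeds the claimed bound $b^4M_1\,11^{-4}\approx 0.00044\,b^4$. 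So the numerical constant in the lemma does not hold for arbitrary sequences satisfying the stated hypotheses. What is true, and suffices for the application (uniform integrability in Lemma~\ref{LPmualpha}), is that $\sup_n n^\kappa/a_n<\infty$, which follows immediately from $a_n\ge m>0$ and $a_n/n^\kappa\to\lambda>0$; the bound should then read $b^{4/p}M_p\big(\sup_n n^\kappa/a_n\big)^{4/p}$, or one should add a hypothesis ensuring $\inf_n a_n/n^\kappa\ge\lambda m$ (as does hold for the specific choice $a_n=n/p$ used in the paper). Your instinct identified a genuine weakness, but do not expect to close it as the lemma is currently worded.
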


The proof of this lemma is postponed to the end of this paper. As a concrete and geometrically motivated example we consider the distribution on $\BB_p^n$ arising as the projection to the first $n$ coordinates of the cone probability measure $\bC_{n+m_n,p}$ on $\BB_p^{n+m_n}$, where $m_n$ is an increasing sequence satisfying $\inf_n m_n=m>4$ and $\lim_{n\to\infty}{m_n\over n^\kappa}=\lambda$ for some $\kappa\geq 1$ and $\lambda\in(0,\infty)$. As discussed above, this case corresponds to $\bP_{n,p,\bW_n}$ with $\bW_n=\gamma({m_n\over p},{1\over p})$ and fits the assumptions of Lemma \ref{lem:Moments}. The same holds for the projection of the uniform distribution $\bU_{n + m_n,p}$ corresponding to $\bP_{n,p,\bW_n}$ with $\bW_n=\gamma(1+{m_n\over p},{1\over p})$. In particular, Theorem \ref{thm:beta} applies to these situations.

%
%---------------------------------------------------------------------------
%

\section{Proofs}\label{sec:Proofs}

This section shall prove Theorem \ref{thm:alpha} and Theorem \ref{thm:beta}. The proofs will follow in the footsteps of the proof of \cite[Theorem C]{KP-Stiefel}, adapting and generalizing the arguments where necessary. We start off by formulating some probabilistic representations of the target quantities and show some auxiliary results for the proof.\\
\\
Assume the set-up of Theorem \ref{thm:alpha} and for a fixed Stiefel matrix $V \in \mathbb{V}_{n,k}$ denote by $V_{\bullet,j}$, $j=1, \ldots, n$ its columns. Then, by \eqref{eq:DefTargetMeasure} and the representation results from Proposition \ref{prop:Barthe}
it follows that for any Borel set $A \in \mathcal{B}(\RR^k)$,
\begin{equation}\label{eq:muV}
    \mu_{V X^{(n)}}(A) = \mathbb{P}(VX^{(n)} \in A) = \mathbb{P}\Big(\sum_{j=1}^n n^{1/p} \frac{Z_j}{(\lVert Z^{(n)} \rVert_p^p + W_n)^{1/p}} V_{\bullet,j} \in A\Big),
\end{equation}
where $Z^{(n)} = (Z_1, \ldots, Z_n)$ with $Z_j \sim \bN_p$ i.i.d.\ and $W_n \sim \bW_n$ independent of $Z^{(n)}$. Moreover, let
\begin{equation}\label{eq:mutValpha}
    \tilde{\mu}_{V X^{(n)}}(A) := \mathbb{P}\Big(\Big(\frac{1}{1+\alpha}\Big)^{1/p}\sum_{j=1}^n Z_j V_{\bullet,j} \in A\Big),
\end{equation}
again with i.i.d. $Z_j \sim \bN_p$. We shall see that we can confine our analysis to $\tilde{\mu}_{V X^{(n)}}$ instead of $\mu_{V X^{(n)}}$, since they are arbitrarily close to each other in $n\in  \NN$ with respect to the L\'evy-Prokhorov metric. On the space $\mathcal{M}_1(\RR^k)$ of probability measures on $\RR^k$, the L\'{e}vy--Prokhorov metric $\rho_\mathrm{LP}$ is defined by
\[
\rho_\mathrm{LP}(\mu,\nu) := \inf \{\varepsilon > 0 \colon \mu(A) \le \nu(A_\varepsilon) + \varepsilon \text{ and } \nu(A) \le \mu(A_\varepsilon) + \varepsilon \text{ for all $A \in \mathcal{B}(\RR^k)$}\},
\]
where $A_\varepsilon$ denotes the $\varepsilon$-neighborhood of $A \in \mathcal{B}(\RR^k)$, defined as
\[
A_\varepsilon := \{x \in \RR^k \colon \|a-x\|_2 < \varepsilon \text{ for some $a \in A$}\},\qquad\varepsilon>0.
\]
We shall now prove that for the L\'{e}vy--Prokhorov metric $\rho_\mathrm{LP}$ on $\mathcal{M}_1(\mathbb{R}^k)$, $\rho_\mathrm{LP}(\mu_V, \tilde{\mu}_V)$ converges to $0$ uniformly over all $V \in \mathbb{V}_{n,k}$, as $n \to \infty$.

\begin{lemma}\label{LPmualpha}
For $p\in [1, \infty)$ and any $n\in \NN$ set $X^{(n)}$ as in Theorem \ref{thm:alpha}. Then, for $k \le n$, we have
\[
\lim_{n \to \infty} \sup_{V \in \mathbb{V}_{n,k}} \rho_\mathrm{LP}(\mu_{V X^{(n)}}, \tilde{\mu}_{V X^{(n)}}) = 0.
\]
\end{lemma}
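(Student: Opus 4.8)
The plan is to carry out the same coupling step as in the proof of \cite[Theorem C]{KP-Stiefel}: realise both random measures on one probability space through \eqref{eq:muV}--\eqref{eq:mutValpha} and bound their L\'evy--Prokhorov distance by a coupling estimate. Write $Z^{(n)}=(Z_1,\dots,Z_n)$ with i.i.d.\ $\bN_p$ coordinates, $W_n\sim\bW_n$ independent of $Z^{(n)}$, and put
\[
R_n:=\Big(\frac{n}{\lVert Z^{(n)}\rVert_p^p+W_n}\Big)^{1/p},\qquad c:=\Big(\frac{1}{1+\alpha}\Big)^{1/p},\qquad S_n^V:=\sum_{j=1}^n Z_j\,V_{\bullet,j}\in\RR^k .
\]
Since $n^{1/p}Z_j(\lVert Z^{(n)}\rVert_p^p+W_n)^{-1/p}=R_nZ_j$, formulas \eqref{eq:muV} and \eqref{eq:mutValpha} read $\mu_{VX^{(n)}}=\cD(R_nS_n^V)$ and $\tilde\mu_{VX^{(n)}}=\cD(c\,S_n^V)$, and for every $V$ these are carried by the \emph{same} randomness $(Z^{(n)},W_n)$. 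Using the elementary bound $\rho_\mathrm{LP}(\cD(\xi),\cD(\eta))\le\varepsilon$, valid for random vectors $\xi,\eta$ on a common space whenever $\PP(\lVert\xi-\eta\rVert_2>\varepsilon)\le\varepsilon$ (because $\PP(\xi\in A)\le\PP(\eta\in A_\varepsilon)+\PP(\lVert\xi-\eta\rVert_2>\varepsilon)$ and symmetrically), it therefore suffices to prove that for every $\varepsilon>0$ there is an $N$ with $\sup_{V\in\mathbb{V}_{n,k}}\PP(|R_n-c|\,\lVert S_n^V\rVert_2>\varepsilon)\le\varepsilon$ for all $n\ge N$.

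Two ingredients give this. First, $R_n\to c$ in probability as $n\to\infty$, and this statement is entirely free of $V$: by the strong law of large numbers $\tfrac1n\lVert Z^{(n)}\rVert_p^p=\tfrac1n\sum_{j=1}^n|Z_j|^p\to\EE|Z_1|^p=1$ almost surely (the density $f_p$ being normalised precisely so that $\EE|Z_1|^p=1$), while $W_n/n\to\alpha$ in probability by hypothesis, whence $\tfrac1n(\lVert Z^{(n)}\rVert_p^p+W_n)\to1+\alpha>0$ in probability and the continuous mapping theorem yields $R_n\to(1+\alpha)^{-1/p}=c$ in probability. Second, with $\sigma^2:=\EE[Z_1^2]\in(0,\infty)$, the vectors $S_n^V$ have second moment bounded \emph{uniformly} in $n$ and $V$: since the $Z_j$ are independent and centred and the rows $v_1,\dots,v_k$ of $V$ satisfy $VV^T=I_k$,
\[
\EE\lVert S_n^V\rVert_2^2=\sum_{i=1}^k\EE\Big(\sum_{j=1}^n Z_jV_{ij}\Big)^2=\sigma^2\sum_{i=1}^k\sum_{j=1}^n V_{ij}^2=\sigma^2\sum_{i=1}^k\lVert v_i\rVert_2^2=\sigma^2 k ,
\]
so that $\PP(\lVert S_n^V\rVert_2>t)\le\sigma^2 k/t^2$ for every $t>0$, uniformly in $n$ and $V$.

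Combining the two, for any $\delta>0$ and any $V\in\mathbb{V}_{n,k}$,
\[
\PP\big(|R_n-c|\,\lVert S_n^V\rVert_2>\varepsilon\big)\le\PP(|R_n-c|>\delta)+\PP\big(\lVert S_n^V\rVert_2>\varepsilon/\delta\big)\le\PP(|R_n-c|>\delta)+\frac{\sigma^2 k\,\delta^2}{\varepsilon^2}.
\]
I would first choose $\delta=\delta(\varepsilon)$ with $\sigma^2 k\delta^2/\varepsilon^2\le\varepsilon/2$, and then $N=N(\varepsilon)$ with $\PP(|R_n-c|>\delta)\le\varepsilon/2$ for all $n\ge N$ --- possible by the first ingredient, and uniform in $V$ only because $R_n$ does not depend on $V$. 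This gives $\sup_{V\in\mathbb{V}_{n,k}}\rho_\mathrm{LP}(\mu_{VX^{(n)}},\tilde\mu_{VX^{(n)}})\le\varepsilon$ for all $n\ge N$, and letting $\varepsilon\downarrow0$ finishes the proof. This lemma has no real obstacle; the only delicate point is the uniformity over $V\in\mathbb{V}_{n,k}$, and it is supplied by exactly these two features --- the limit $R_n\to c$ being $V$-free, and the variance identity $\EE\lVert S_n^V\rVert_2^2=\sigma^2 k$ holding verbatim for every Stiefel matrix because $VV^T=I_k$.
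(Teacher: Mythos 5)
Your proof is correct and gives the right conclusion, but it takes a genuinely leaner route than the paper. The paper controls $\PP\bigl(\lVert (R_n-c)S_n^V\rVert_2\ge\varepsilon\bigr)$ by Markov's inequality followed by Cauchy--Schwarz, which reduces the problem to showing $\EE\bigl|R_n-c\bigr|^2\to 0$; since $L^2$-convergence of $R_n$ requires more than convergence in probability, the paper then invokes uniform integrability of $(n/\lVert Z^{(n)}\rVert_p^p)^{2/p}$, which in turn is verified via the gamma-moment estimate of Lemma~\ref{lem:Moments}. You bypass all of this: by splitting the event with a union bound into $\{|R_n-c|>\delta\}$ and $\{\lVert S_n^V\rVert_2>\varepsilon/\delta\}$, you only need $R_n\to c$ \emph{in probability} (immediate from the law of large numbers and the hypothesis $W_n/n\to\alpha$) and the uniform second-moment identity $\EE\lVert S_n^V\rVert_2^2=\sigma^2 k$ together with Chebyshev. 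The decoupling is legitimate without any independence between $R_n$ and $S_n^V$, and the resulting bound is manifestly $V$-free, so the uniformity over $\mathbb{V}_{n,k}$ comes for free. The trade-off is purely one of economy: the paper's $L^1$/$L^2$ route produces a quantitative rate (and reuses machinery they need elsewhere, namely Lemma~\ref{lem:Moments}), whereas your argument is shorter and needs no moment estimate beyond the variance of $Z_1$. One small remark: your application of Strassen-type reasoning, $\PP(\lVert\xi-\eta\rVert_2>\varepsilon)\le\varepsilon\Rightarrow\rho_\mathrm{LP}(\cD(\xi),\cD(\eta))\le\varepsilon$, is exactly the step the paper carries out by hand with the inequality $\tilde\mu_V(A)\le\mu_V(A_\varepsilon)+\PP(\lVert\cdot\rVert_2\ge\varepsilon)$ and its symmetric counterpart, so you have stated cleanly the general principle they use implicitly.
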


\begin{proof}
Let $A \in \mathcal{B}(\RR^k)$, and $\varepsilon > 0$. Then, 
\begin{align}
    \tilde{\mu}_{V X^{(n)}}(A)
    &= \mathbb{P} \Big(\Big(\frac{1}{1+\alpha}\Big)^{1/p}\sum_{j=1}^n Z_j V_{\bullet, j} \in A \Big)\notag\\
    &\le \mathbb{P}\Big(\sum_{j=1}^n n^{1/p} \frac{Z_j}{(\lVert Z^{(n)} \rVert_p^p + W_n)^{1/p}} V_{\bullet,j} \in A_\varepsilon\Big)\notag\\
    &\quad+ \mathbb{P}\Big(\Big\lVert \Big(\frac{1}{1+\alpha}\Big)^{1/p} \sum_{j=1}^n Z_j V_{\bullet, j} - \sum_{j=1}^n n^{1/p} \frac{Z_j}{(\lVert Z^{(n)} \rVert_p^p + W_n)^{1/p}} V_{\bullet,j} \Big\rVert_2 \ge \varepsilon\Big)\notag\\
    &= \mu_{V X^{(n)}}(A_\varepsilon) + \mathbb{P}\Big(\Big\lVert \Big(\frac{1}{1+\alpha}\Big)^{1/p} \sum_{j=1}^n Z_j V_{\bullet, j} - \sum_{j=1}^n n^{1/p} \frac{Z_j}{(\lVert Z^{(n)} \rVert_p^p + W_n)^{1/p}} V_{\bullet,j} \Big\rVert_2 \ge \varepsilon\Big).\label{eq:ineq1alpha}
\end{align}
Let us prove that the second summand on the right-hand side converges to $0$, as $n \to \infty$. By Markov's inequality,
\begin{align*}
&\mathbb{P}\Big(\Big\lVert \Big(\frac{1}{1+\alpha}\Big)^{1/p} \sum_{j=1}^n Z_j V_{\bullet, j} - \sum_{j=1}^n n^{1/p} \frac{Z_j}{(\lVert Z^{(n)} \rVert_p^p + W_n)^{1/p}} V_{\bullet,j} \Big\rVert_2 \ge \varepsilon\Big)\\
&\hspace{2cm} \le \varepsilon^{-1} \mathbb{E} \Big\lVert \Big(\frac{1}{1+\alpha}\Big)^{1/p} \sum_{j=1}^n Z_j V_{\bullet, j} - \sum_{j=1}^n n^{1/p} \frac{Z_j}{(\lVert Z^{(n)} \rVert_p^p + W_n)^{1/p}} V_{\bullet,j} \Big\rVert_2,
\end{align*}
and by the Cauchy--Schwarz inequality,
\begin{align}
    &\mathbb{E} \Big\lVert \Big(\frac{1}{1+\alpha}\Big)^{1/p} \sum_{j=1}^n Z_j V_{\bullet, j} - \sum_{j=1}^n n^{1/p} \frac{Z_j}{(\lVert Z^{(n)} \rVert_p^p + W_n)^{1/p}} V_{\bullet,j} \Big\rVert_2\notag\\
    &\qquad= \mathbb{E} \Big(\Big\lVert \sum_{j=1}^n Z_j V_{\bullet, j}\Big\lVert_2 \cdot \Big| \Big(\frac{1}{1+\alpha}\Big)^{1/p} - \frac{n^{1/p}}{(\lVert Z^{(n)} \rVert_p^p + W_n)^{1/p}}\Big|\Big)\notag\\
    &\qquad\le \sqrt{\mathbb{E}\Big\lVert \sum_{j=1}^n Z_j V_{\bullet, j}\Big\lVert_2^2} \cdot \sqrt{\mathbb{E}\Big| \Big(\frac{1}{1+\alpha}\Big)^{1/p} - \frac{n^{1/p}}{(\lVert Z^{(n)} \rVert_p^p + W_n)^{1/p}}\Big|^2}.\label{eq:ineq2alpha}
\end{align}
As $Z_1, \ldots, Z_n$ are i.i.d.\ with mean zero and $V_{\bullet,1}, \ldots, V_{\bullet,n}$ are orthonormal vectors, the first factor in \eqref{eq:ineq2alpha} reads 
\begin{equation}\label{eq:L2eqalpha}
\mathbb{E}\Big\lVert \sum_{j=1}^n Z_j V_{\bullet, j}\Big\lVert_2^2
= \mathbb{E}\Big[ \sum_{i,j=1}^n Z_iZ_j \langle V_{\bullet, i}, V_{\bullet, j} \rangle_2 \Big]
= \mathbb{E}[Z_1^2] \sum_{j=1}^n \langle V_{\bullet, i}, V_{\bullet, j} \rangle_2
= k \mathbb{E}[Z_1^2].
\end{equation}

To address the second factor in \eqref{eq:ineq2alpha}, let us first argue that
\begin{equation}\label{LLNalpha}
    \xi_n := \Big(\Big(\frac{1}{1+\alpha}\Big)^{1/p} - \frac{n^{1/p}}{(\lVert Z^{(n)} \rVert_p^p + W_n)^{1/p}}\Big)^2 \longrightarrow 0
\end{equation}
in probability, as $n \to \infty$. Indeed, by the continuous mapping theorem, it suffices to show that
\[
\frac{\lVert Z^{(n)} \rVert_p^p}{n} + \frac{W_n}{n} \longrightarrow 1 + \alpha
\]
in probability. This follows from the fact that as $Z_1, \ldots, Z_n$ are i.i.d.\ $p$-generalized Gaussian random variables, we have $\mathbb{E}|Z_i|^p = 1$, and moreover that by assumption, $W_n/n \longrightarrow \alpha$ in probability. In fact, we even have $\xi_n \longrightarrow 0$ in $L^1$. To see this, it suffices to show that $(\xi_n)_n$ is uniformly integrable, which in combination with \eqref{LLNalpha} yields convergence in $L^1$. Clearly, $(\xi_n)_n$ is uniformly integrable if the sequence
\[
\Big(\frac{n}{\lVert Z^{(n)} \rVert_p^p + W_n}\Big)^{2/p} \le \Big(\frac{n}{\lVert Z \rVert_p^p}\Big)^{2/p}
 \]
is uniformly integrable, where we have used that $W_n \ge 0$. This in turn follows from the fact that $\|Z^{(n)}\|_p^p \sim \gamma(n/p, 1/p)$ together with \eqref{eq:GammaNegMoment} for $\bW_n = \gamma(n/p, 1/p)$, $\kappa=1$ and $\lambda = 1/p$, which yields 
\[
\mathbb{E}\Big(\frac{n}{\lVert Z^{(n)} \rVert_p^p}\Big)^{4/p} \le p^{4/p}M_p \in (0,\infty)
\]
for all $n\in\NN$. Hence, $\xi_n \longrightarrow 0$ in $L^1$, and as a consequence, the second factor in \eqref{eq:ineq2alpha} converges to $0$. This implies that the second summand in \eqref{eq:ineq1alpha} converges to $0$ uniformly in $V \in \mathbb{V}_{n,k}$. Altogether, we have proven that for any $\varepsilon>0$,
\[
\tilde{\mu}_{V X^{(n)}}(A) \le \mu_{V X^{(n)}}(A_\varepsilon) + \varepsilon
\]
for $n$ sufficiently large. In the same way, we may also prove that
\[
\mu_{V X^{(n)}}(A) \le \tilde{\mu}_{V X^{(n)}}(A_\varepsilon) + \varepsilon
\]
for $n$ sufficiently large, which finishes the proof.
\end{proof}

Finally, let us replace $V \in \mathbb{V}_{n,k}$ by random variables $V_{n,k}$, i.\,e.\ the Stiefel matrix is chosen at random according to the uniform distribution $\mu_{n,k}$ on $\mathbb{V}_{n,k}$. Based on Lemma \ref{LPmualpha}, we may prove that a weak LDP for the modified sequence $\tilde{\mu}_{V_{n,k} X^{(n)}}$ implies a weak LDP (in the sense of \cite[Definition, p.7]{DZ}) for $\mu_{V_{n,k} X^{(n)}}$, both respectively defined as in \eqref{eq:muV} and \eqref{eq:mutValpha} with respect to $V_{n,k}$.

\begin{lemma}\label{LDPsequivalpha}
Assume the set-up of Theorem \ref{thm:alpha} and recall the notation \eqref{eq:muV} and \eqref{eq:mutValpha}. If the sequence $\tilde{\mu}_{V_{n,k} X^{(n)}}$ satisfies a weak LDP on $\mathcal{M}_1(\mathbb{R}^k)$ at speed $n$ and rate function $I$, then the sequence $\mu_{V_{n,k} X^{(n)}}$ satisfies the same weak LDP.
\end{lemma}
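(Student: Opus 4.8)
The plan is to use the standard fact that a large deviation principle is preserved under ``exponentially equivalent'' perturbations (see \cite[Theorem 4.2.13]{DZ}), adapted to the weak LDP and to the metric space $(\mathcal{M}_1(\mathbb{R}^k), \rho_{\mathrm{LP}})$. Concretely, I would first observe that $\mathcal{M}_1(\mathbb{R}^k)$ equipped with the topology of weak convergence is metrised by $\rho_{\mathrm{LP}}$, so that statements about closed/open sets and about $\rho_{\mathrm{LP}}$-neighbourhoods are interchangeable. The two sequences of \emph{random} probability measures $\mu_{V_{n,k}X^{(n)}}$ and $\tilde\mu_{V_{n,k}X^{(n)}}$ live on the common probability space carrying $V_{n,k}$ (and the independent $Z^{(n)}, W_n$), so it makes sense to compare them pathwise.

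The key step is to upgrade Lemma \ref{LPmualpha} from a deterministic uniform bound over $V\in\mathbb{V}_{n,k}$ to the exponential-equivalence estimate that for every $\delta>0$,
\[
\limsup_{n\to\infty}\frac1n\log\PP\big(\rho_{\mathrm{LP}}(\mu_{V_{n,k}X^{(n)}},\tilde\mu_{V_{n,k}X^{(n)}})>\delta\big)=-\infty.
\]
But Lemma \ref{LPmualpha} gives something much stronger: since $\sup_{V\in\mathbb{V}_{n,k}}\rho_{\mathrm{LP}}(\mu_{VX^{(n)}},\tilde\mu_{VX^{(n)}})\to0$ as $n\to\infty$, for every $\delta>0$ there is $n_0(\delta)$ such that this supremum is below $\delta$ for all $n\ge n_0(\delta)$; hence the event $\{\rho_{\mathrm{LP}}(\mu_{V_{n,k}X^{(n)}},\tilde\mu_{V_{n,k}X^{(n)}})>\delta\}$ is \emph{empty} for all large $n$, and the $\limsup$ above is trivially $-\infty$. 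So the exponential-equivalence hypothesis holds for free.

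Granting this, the proof is the routine transfer argument. Suppose $\tilde\mu_{V_{n,k}X^{(n)}}$ satisfies a weak LDP at speed $n$ with rate function $I$. For the upper bound on a compact set $K\subseteq\mathcal{M}_1(\mathbb{R}^k)$: given $\delta>0$, cover $K$ by its $\delta$-neighbourhood $K_\delta$ (in $\rho_{\mathrm{LP}}$), write $\PP(\mu_{V_{n,k}X^{(n)}}\in K)\le \PP(\tilde\mu_{V_{n,k}X^{(n)}}\in \overline{K_\delta})+\PP(\rho_{\mathrm{LP}}>\delta)$, and note the second term vanishes identically for large $n$ while the first is controlled by the weak LDP upper bound for $\tilde\mu$; then let $\delta\downarrow0$ and use compactness of $K$ together with lower semicontinuity of $I$ to recover $-\inf_K I$. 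For the lower bound on an open set $G$: for $\nu\in G$ pick $\delta$ with the $\delta$-ball $B(\nu,\delta)\subseteq G$, use $\PP(\mu_{V_{n,k}X^{(n)}}\in G)\ge \PP(\tilde\mu_{V_{n,k}X^{(n)}}\in B(\nu,\delta/2))-\PP(\rho_{\mathrm{LP}}>\delta/2)$, again kill the second term for large $n$, and apply the weak LDP lower bound for $\tilde\mu$ to get $\liminf\frac1n\log\PP(\mu_{V_{n,k}X^{(n)}}\in G)\ge -I(\nu)$; optimise over $\nu\in G$. This yields precisely the weak LDP for $\mu_{V_{n,k}X^{(n)}}$ with the same rate function $I$.

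I do not expect any genuine obstacle here: because Lemma \ref{LPmualpha} provides a \emph{deterministic, uniform} (not merely exponential) vanishing of the $\rho_{\mathrm{LP}}$-distance, the usual delicate exponential-equivalence estimate degenerates to the statement that the ``bad'' event is eventually empty, and the remainder is bookkeeping with neighbourhoods. The only points requiring a little care are that one works with the \emph{weak} LDP (so compactness must be invoked explicitly in the upper bound rather than assumed, and one should not claim goodness of $I$ at this stage) and that one consistently uses $\rho_{\mathrm{LP}}$ to pass between the metric and the weak topology; both are handled by the standard references \cite[Section 4.2]{DZ}.
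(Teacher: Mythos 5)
Your reduction to "the bad event $\{\rho_{\mathrm{LP}}(\mu_{V_{n,k}X^{(n)}},\tilde\mu_{V_{n,k}X^{(n)}})>\delta\}$ is eventually empty" is exactly the right use of Lemma~\ref{LPmualpha}, and your lower-bound argument on open sets is fine. However, your upper-bound step has a real gap: you write $\mathbb{P}(\mu_{V_{n,k}X^{(n)}}\in K)\le\mathbb{P}(\tilde\mu_{V_{n,k}X^{(n)}}\in\overline{K_\delta})$ (for large $n$) and then claim the right-hand side is "controlled by the weak LDP upper bound for $\tilde\mu$." But a weak LDP only gives the upper bound on \emph{compact} sets, and $\overline{K_\delta}$ is merely closed. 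The space $\mathcal{M}_1(\mathbb{R}^k)$ with the weak topology is Polish but not locally compact, so closed metric neighbourhoods of a compact set are in general not compact; the weak LDP upper bound therefore does not apply to $\overline{K_\delta}$, and the subsequent $\delta\downarrow 0$/lower-semicontinuity step has nothing to feed on. You flag "compactness must be invoked explicitly in the upper bound rather than assumed" as a point needing care, but you do not actually resolve it, and as stated the argument does not go through.

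The paper avoids this issue entirely by not trying to verify the open/compact form of the weak LDP directly. Instead it uses the characterisation of a weak LDP via a base of the topology (\cite[Theorem 4.1.11]{DZ}): it is enough to control, for each fixed $\nu$, the quantities $\inf_{r>0}\liminf_n\frac1n\log\mathbb{P}(\mu_{V_{n,k}X^{(n)}}\in B_r(\nu))$ and $\inf_{r>0}\limsup_n\frac1n\log\mathbb{P}(\mu_{V_{n,k}X^{(n)}}\in B_r(\nu))$. Since for $n$ large the inclusions $\{\tilde\mu\in B_{r/2}(\nu)\}\subseteq\{\mu\in B_r(\nu)\}\subseteq\{\tilde\mu\in B_{3r/2}(\nu)\}$ hold surely, one sandwiches the ball probabilities of $\mu$ between those of $\tilde\mu$, takes $\liminf$/$\limsup$, and then takes the infimum over $r$; the weak LDP for $\tilde\mu$ forces both infima to equal $-I(\nu)$, and \cite[Theorem 4.1.11]{DZ} delivers the weak LDP for $\mu_{V_{n,k}X^{(n)}}$. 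This is both shorter and immune to the non-compactness problem, because it never touches any set other than open balls. If you want to keep the flavour of your argument, you would have to replace the $\overline{K_\delta}$ step by a finite cover of $K$ with $\rho_{\mathrm{LP}}$-balls and then run essentially the same ball estimate as the paper, at which point you have reproduced the paper's route.
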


\begin{proof}
It suffices to check the weak LDP on a basis of the topology of $\mathcal{M}_1(\mathbb{R}^k)$, e.\,g., the balls
\[
B_r(\nu) := \{\mu \in \mathcal{M}_1(\mathbb{R}^k) \colon \rho_\mathrm{LP}(\mu,\nu) < r\}
\]
for any $r \in (0, \infty)$. By Lemma \ref{LPmualpha}, for $n$ sufficiently large we have $\rho_\mathrm{LP}(\tilde{\mu}_{V_{n,k} X^{(n)}},\mu_{V_{n,k} X^{(n)}}) < r/2$ uniformly over all realizations of $V_{n,k} \in \mathbb{V}_{n,k}$. Therefore, by the triangle inequality for $\rho_\mathrm{LP}$,
\[
\frac{1}{n} \log\mathbb{P}(\tilde{\mu}_{V_{n,k} X^{(n)}} \in B_{r/2}(\nu))
\le \frac{1}{n} \log\mathbb{P}(\mu_{V_{n,k} X^{(n)}} \in B_r(\nu))
\le \frac{1}{n} \log\mathbb{P}(\tilde{\mu}_{V_{n,k} X^{(n)}} \in B_{3r/2}(\nu)),
\]
and hence,
\begin{align*}
\limsup_{n \to \infty} \frac{1}{n} \log\mathbb{P}(\tilde{\mu}_{V_{n,k} X^{(n)}} \in B_{r/2}(\nu))
&\le \liminf_{n \to \infty} \frac{1}{n} \log\mathbb{P}(\mu_{V_{n,k} X^{(n)}} \in B_r(\nu))\\
&\hspace{-2cm}\le \limsup_{n \to \infty} \frac{1}{n} \log\mathbb{P}(\mu_{V_{n,k} X^{(n)}} \in B_r(\nu))
\le \liminf_{n \to \infty} \frac{1}{n} \log\mathbb{P}(\tilde{\mu}_{V_{n,k} X^{(n)}} \in B_{3r/2}(\nu)).
\end{align*}
Thus, by monotonicity in $r$, taking the infimum over $r \in (0, \infty)$, the LDP for $\tilde{\mu}_{V_{n,k} X^{(n)}}$ yields
\[
-I(\nu) \le \inf_{r\in(0,\infty)} \liminf_{n \to \infty} \frac{1}{n} \log\mathbb{P}(\mu_{V_{n,k} X^{(n)}} \! \in \! B_r(\nu))
\le \inf_{r\in(0,\infty)} \limsup_{n \to \infty} \frac{1}{n} \log\mathbb{P}(\mu_{V_{n,k} X^{(n)}} \! \in \! B_r(\nu)) \le -I(\nu).
\]
From here the claim follows from \cite[Theorem 4.1.11]{DZ}.
\end{proof}

On a compact space, weak and full LDPs coincide. Here, compactness is provided by the following lemma.

\begin{lemma}\label{compalpha}
There is a constant $C \in (0,\infty)$ such that for all $n \ge k$ and all $V \in \mathbb{V}_{n,k}$,
\[
\mu_{V X^{(n)}} \in M_C := \Big\{\mu \in \mathcal{M}_1(\mathbb{R}^k) \colon \int_{\mathbb{R}^k} \lVert x \rVert_2 \,  \mu(\dint x) \le C\Big\},
\]
where the set $M_C$ is compact for any choice of $C \in (0,\infty)$.
\end{lemma}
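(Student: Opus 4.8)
The plan is to prove the statement in two parts: first, that $\mu_{VX^{(n)}} \in M_C$ for a suitable absolute constant $C$, and second, that $M_C$ is compact in $\mathcal{M}_1(\mathbb{R}^k)$ with the weak topology.

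For the membership claim, I would compute the first moment $\int_{\mathbb{R}^k} \lVert x \rVert_2 \, \mu_{VX^{(n)}}(\dint x) = \mathbb{E}\lVert VX^{(n)}\rVert_2$ directly using the probabilistic representation from Proposition \ref{prop:Barthe}. Writing $VX^{(n)} \overset{\cD}{=} n^{1/p}\sum_{j=1}^n Z_j (\lVert Z^{(n)}\rVert_p^p + W_n)^{-1/p} V_{\bullet,j}$, I would bound $\mathbb{E}\lVert VX^{(n)}\rVert_2$ using Cauchy--Schwarz (or Jensen) exactly as in the proof of Lemma \ref{LPmualpha}: the $L^2$-norm of $\sum_j Z_j V_{\bullet,j}$ equals $\sqrt{k\,\mathbb{E}[Z_1^2]}$ by orthonormality of the columns $V_{\bullet,j}$, and the scalar factor $n^{1/p}(\lVert Z^{(n)}\rVert_p^p + W_n)^{-1/p}$ is bounded in $L^2$ by $(n/\lVert Z^{(n)}\rVert_p^p)^{1/p}$, whose moments are controlled uniformly in $n$ via Lemma \ref{lem:Moments} applied to $\bW_n = \gamma(n/p,1/p)$ (with $\kappa = 1$, $\lambda = 1/p$), precisely as was done in Lemma \ref{LPmualpha}. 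This yields $\mathbb{E}\lVert VX^{(n)}\rVert_2 \le C$ for a constant $C$ depending only on $p$ and $k$, uniformly over $n \ge k$ and $V \in \mathbb{V}_{n,k}$.

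For compactness of $M_C$, the point is that a uniform bound on first moments forces tightness: by Markov's inequality, $\mu(\{x : \lVert x\rVert_2 > R\}) \le C/R$ for every $\mu \in M_C$, so $M_C$ is a tight family of probability measures on $\mathbb{R}^k$, hence relatively compact in the weak topology by Prohorov's theorem. It remains to check that $M_C$ is closed, i.e. that the functional $\mu \mapsto \int \lVert x\rVert_2 \, \mu(\dint x)$ is lower semicontinuous with respect to weak convergence; this is standard, since $\lVert\cdot\rVert_2$ is nonnegative and continuous, so $\int \lVert x\rVert_2\,\mu(\dint x) = \sup_{R>0}\int (\lVert x\rVert_2 \wedge R)\,\mu(\dint x)$ is a supremum of weakly continuous functionals. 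A closed subset of a relatively compact set is compact, which gives the claim.

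The routine parts here are the Prohorov/lower-semicontinuity argument for compactness of $M_C$, which is textbook. The only place requiring genuine care is ensuring the first-moment bound is truly \emph{uniform} in both $n$ and $V$; the uniformity in $V$ is immediate because the bound only sees the orthonormality of the columns (which holds for every $V \in \mathbb{V}_{n,k}$), and the uniformity in $n$ rests entirely on the negative-moment estimate for $\gamma(n/p,1/p)$ from Lemma \ref{lem:Moments}, which is exactly the ingredient already exploited in Lemma \ref{LPmualpha}. So the main (mild) obstacle is simply organizing the moment computation cleanly; no new idea beyond what has already appeared is needed. Note also that the stated conclusion is for $X^{(n)}$ as in Theorem \ref{thm:alpha}; the analogous statement under the scaling of Theorem \ref{thm:beta} follows the same way, replacing $n^{1/p}$ by $n^{\kappa/p}$ and using the uniform integrability hypothesis there (or the sufficient condition \eqref{eq:SuffCond}) in place of Lemma \ref{lem:Moments}.
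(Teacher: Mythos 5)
Your proof is correct and follows essentially the same route as the paper: a uniform bound on the first moment $\mathbb{E}\lVert V X^{(n)}\rVert_2$, combining the identity $\mathbb{E}\lVert\sum_{j=1}^n Z_j V_{\bullet,j}\rVert_2^2 = k\,\mathbb{E}[Z_1^2]$ with the negative-moment estimate of Lemma \ref{lem:Moments} for $\lVert Z^{(n)}\rVert_p^p \sim \gamma(n/p,1/p)$ after dropping $W_n \ge 0$; then weak compactness of $M_C$. The only cosmetic differences are that the paper splits the moment bound via the triangle inequality into the already-controlled difference term and the easy term $(1+\alpha)^{-1/p}\sum_j Z_j V_{\bullet,j}$ (thereby recycling the computation after \eqref{eq:ineq2alpha}), whereas you apply Cauchy--Schwarz directly to the product; and the paper cites the compactness of $M_C$ from \cite[Proof of Lemma 5.3]{KP-Stiefel}, whereas you spell out the standard Prohorov/lower-semicontinuity argument.

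One small inaccuracy to fix in your write-up (the paper itself is also slightly imprecise here): the columns $V_{\bullet,1},\ldots,V_{\bullet,n}$ of $V\in\mathbb{V}_{n,k}$ are $n$ vectors in $\mathbb{R}^k$, so they cannot be orthonormal when $n>k$. What the identity $\mathbb{E}\lVert\sum_{j} Z_j V_{\bullet,j}\rVert_2^2 = k\,\mathbb{E}[Z_1^2]$ actually uses is the trace identity $\sum_{j=1}^n \lVert V_{\bullet,j}\rVert_2^2 = \operatorname{tr}(VV^T) = k$, which holds for every $V\in\mathbb{V}_{n,k}$ and is exactly what makes the bound uniform in $V$.
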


\begin{proof}
The compactness of the set $M_C$ in the weak topology on $\mathcal{M}_1(\mathbb{R}^k)$ has been shown in \cite[Proof of Lemma 5.3]{KP-Stiefel}, so it remains to prove the first assertion. To this end, recalling the representation of the distribution $\mu_{V X^{(n)}}$ given in \eqref{eq:muV}, it suffices to prove that 
\[
\limsup_{n \to \infty} \sup_{V \in \mathbb{V}_{n,k}} \mathbb{E}\Big\lVert\sum_{j=1}^n n^{1/p} \frac{Z_j}{(\lVert Z^{(n)} \rVert_p^p + W_n)^{1/p}} V_{\bullet,j} \Big\rVert_2 < \infty,
\]
for i.i.d. $Z_j \sim \bN_p$ and $W_n \sim \bW_n$ as in Theorem \ref{thm:alpha}. By the triangle inequality it then follows that 
\begin{align*}
&\mathbb{E}\Big\lVert\sum_{j=1}^n n^{1/p} \frac{Z_j}{(\lVert Z^{(n)} \rVert_p^p + W_n)^{1/p}} V_{\bullet,j} \Big\rVert_2\\
&\le \mathbb{E} \Big\lVert \Big(\frac{1}{1+\alpha}\Big)^{1/p} \sum_{j=1}^n Z_j V_{\bullet, j} - \sum_{j=1}^n n^{1/p} \frac{Z_j}{(\lVert Z^{(n)} \rVert_p^p + W_n)^{1/p}} V_{\bullet,j} \Big\rVert_2 + \mathbb{E} \Big\lVert \Big(\frac{1}{1+\alpha}\Big)^{1/p} \sum_{j=1}^n Z_j V_{\bullet, j} \Big\rVert_2.
\end{align*}
The first summand on the right hand side converges to $0$ uniformly in $V_{n,k} \in \mathbb{V}_{n,k}$ as was shown after \eqref{eq:ineq2alpha}. Moreover, by Hölder's inequality and \eqref{eq:L2eqalpha}, the second summand is uniformly bounded by $\sqrt{k\mathbb{E}[Z_1^2]}/(1+\alpha)^{1/p}$, and thus, the claim follows.
\end{proof}

Combining the accumulated auxiliary results, we now have the sufficient tools to prove Theorem \ref{thm:alpha}.

\begin{proof}[Proof of Theorem \ref{thm:alpha}]
Apply Proposition \ref{prop:TheoremDKP} to the symmetric non-Gaussian random variables $Z_j/(1+\alpha)^{1/p}$, which, by \eqref{eq:MomentXp}, have finite moments of all orders and, in particular, variance
\[
\sigma_{p, \alpha}^2 = \Big(\frac{p}{1+\alpha}\Big)^{2/p} \frac{\Gamma(3/p)}{\Gamma(1/p)}.
\]
Hence, the sequence $\tilde{\mu}_{V_{n,k} X^{(n)}}$ satisfies an LDP on $\mathcal{M}_1(\mathbb{R}^k)$ with speed $n$ and rate function $I$ as stated in Theorem \ref{thm:alpha}. Therefore, by Lemma \ref{LDPsequivalpha}, $\mu_{V_{n,k} X^{(n)}}$ satisfies the same weak LDP, which extends to a full LDP by the compactness arguments given in Lemma \ref{compalpha}, thus finishing the proof.
\end{proof}

The proof of Theorem \ref{thm:beta} works in a very similar way to that of Theorem \ref{thm:alpha}, hence we will only point out the steps where it differs from the previous proof. Given the different scaling of $X^{(n)}$, it follows that for a Stiefel matrix $V \in \mathbb{V}_{n,k}$, we have 

\begin{equation}\label{eq:muVbeta}
    \mu_{V X^{(n)}}(A) := \mathbb{P}(VX^{(n)} \in A) = \mathbb{P}\Big(\sum_{j=1}^n n^{\kappa/p} \frac{Z_j}{(\lVert Z^{(n)}  \rVert_p^p + W_n)^{1/p}} V_{\bullet,j} \in A\Big)
\end{equation}
for any $A \in \mathcal{B}(\RR^k)$, and set 
\begin{equation}\label{mutValphabeta}
    \tilde{\mu}_{V X^{(n)}}(A) := \mathbb{P}\Big(\Big(\frac{1}{\beta}\Big)^{1/p}\sum_{j=1}^n Z_j V_{\bullet,j} \in A\Big),
\end{equation}
using the same notation as in Theorem \ref{thm:alpha} and its proof. The only argument that needs to be adapted is the proof of Lemma \ref{LPmualpha}, which will be replaced by the following Lemma.

\begin{lemma}\label{LPmualphaTilde}
For $p\in [1, \infty)$ and any $n\in \NN$ set $X^{(n)}$ as in Theorem \ref{thm:beta}. Then, for $k \le n$, we have
\[
\lim_{n \to \infty} \sup_{V \in \mathbb{V}_{n,k}} \rho_\mathrm{LP}(\mu_{V X^{(n)}}, \tilde{\mu}_{V X^{(n)}}) = 0.
\]
\end{lemma}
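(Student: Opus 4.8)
The plan is to mirror the proof of Lemma \ref{LPmualpha} almost verbatim, substituting the new scaling exponent $\kappa/p$ for $1/p$ and the limiting constant $\beta$ for $1+\alpha$. Fix $A \in \mathcal{B}(\RR^k)$ and $\varepsilon > 0$. As in the proof of Lemma \ref{LPmualpha}, inserting the $\varepsilon$-neighborhood and bounding the error event gives
\[
\tilde{\mu}_{V X^{(n)}}(A) \le \mu_{V X^{(n)}}(A_\varepsilon) + \mathbb{P}\Big(\Big\lVert \Big(\tfrac{1}{\beta}\Big)^{1/p}\sum_{j=1}^n Z_j V_{\bullet,j} - \sum_{j=1}^n n^{\kappa/p}\tfrac{Z_j}{(\lVert Z^{(n)}\rVert_p^p + W_n)^{1/p}} V_{\bullet,j}\Big\rVert_2 \ge \varepsilon\Big),
\]
and by Markov's and Cauchy--Schwarz's inequalities the probability on the right is bounded by $\varepsilon^{-1}\sqrt{k\,\mathbb{E}[Z_1^2]}$ times $\sqrt{\mathbb{E}\,\xi_n}$, where now
\[
\xi_n := \Big(\Big(\tfrac{1}{\beta}\Big)^{1/p} - \frac{n^{\kappa/p}}{(\lVert Z^{(n)}\rVert_p^p + W_n)^{1/p}}\Big)^2.
\]
The computation \eqref{eq:L2eqalpha} of the first factor carries over unchanged since it only uses orthonormality of the $V_{\bullet,j}$ and $\mathbb{E}[Z_1]=0$. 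So everything reduces to showing $\mathbb{E}\,\xi_n \to 0$.

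For the convergence in probability, write
\[
\frac{\lVert Z^{(n)}\rVert_p^p + W_n}{n^\kappa} = \frac{1}{n^{\kappa-1}}\cdot\frac{\lVert Z^{(n)}\rVert_p^p}{n} + \frac{W_n}{n^\kappa}.
\]
Since $\kappa > 1$ and $\lVert Z^{(n)}\rVert_p^p/n \to \mathbb{E}|Z_1|^p = 1$ in probability (indeed in $L^1$), the first term tends to $0$ in probability, while $W_n/n^\kappa \to \beta$ in probability by hypothesis; hence the sum converges to $\beta \in (0,\infty)$, and the continuous mapping theorem gives $\xi_n \to 0$ in probability. To upgrade this to $L^1$ convergence I will establish uniform integrability of $(\xi_n)_n$, for which it suffices (using $x \mapsto x^2$ and boundedness of $(1/\beta)^{1/p}$) to show uniform integrability of
\[
\Big(\frac{n^\kappa}{\lVert Z^{(n)}\rVert_p^p + W_n}\Big)^{2/p} \le \Big(\frac{n^\kappa}{W_n}\Big)^{2/p} = \Big(\frac{W_n}{n^\kappa}\Big)^{-2/p},
\]
and the latter sequence is uniformly integrable precisely by the standing assumption of Theorem \ref{thm:beta}. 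Combining uniform integrability with convergence in probability yields $\mathbb{E}\,\xi_n \to 0$, so the error probability vanishes uniformly over $V \in \mathbb{V}_{n,k}$ (the bound is $V$-free), giving $\tilde{\mu}_{V X^{(n)}}(A) \le \mu_{V X^{(n)}}(A_\varepsilon) + \varepsilon$ for $n$ large; the symmetric inequality follows identically, and taking the supremum over $V$ and then $\varepsilon \downarrow 0$ completes the proof.

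The only genuine point of difference from Lemma \ref{LPmualpha} — and the one deserving care — is that one can no longer invoke \eqref{eq:GammaNegMoment} with $\bW_n = \gamma(n/p,1/p)$ to control negative moments of $\lVert Z^{(n)}\rVert_p^p$, because here the relevant denominator is dominated by $W_n$ rather than by $\lVert Z^{(n)}\rVert_p^p$; instead the uniform integrability must come from the hypothesis on $(W_n/n^\kappa)^{-2/p}$ itself (which, via \eqref{eq:SuffCond} and Lemma \ref{lem:Moments}, is available in the gamma case of interest). Everything else — the Lévy--Prokhorov bookkeeping, the Cauchy--Schwarz split, the $L^2$ identity \eqref{eq:L2eqalpha} — is identical to the $\alpha$-case, and the subsequent Lemmas \ref{LDPsequivalpha} and \ref{compalpha} and the proof of Theorem \ref{thm:beta} go through with the same cosmetic substitutions.
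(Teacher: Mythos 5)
Your proof is correct and follows essentially the same route as the paper's: the same L\'evy--Prokhorov bookkeeping, the same Markov plus Cauchy--Schwarz split, the same decomposition $\lVert Z^{(n)}\rVert_p^p/n^\kappa + W_n/n^\kappa \to \beta$, and the same appeal to the standing uniform-integrability hypothesis on $(W_n/n^\kappa)^{-2/p}$ (via $W_n \le \lVert Z^{(n)}\rVert_p^p + W_n$) to upgrade convergence in probability of $\xi_n$ to $L^1$. Your closing remark correctly identifies the one substantive change from Lemma \ref{LPmualpha} --- that the source of uniform integrability shifts from the gamma-tail bound on $\lVert Z^{(n)}\rVert_p^p$ to the explicit hypothesis on $W_n$ --- which is precisely why Theorem \ref{thm:beta} carries that extra assumption.
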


\begin{proof}
Let $A \in \mathcal{B}(\RR^k)$, and $\varepsilon > 0$. Then, by analogue expansion as in \eqref{eq:ineq1alpha}, we have that
\begin{align}
    \tilde{\mu}_{V X^{(n)}}(A)
    &\le \mu_{V X^{(n)}}(A_\varepsilon) + \mathbb{P}\Big(\Big\lVert \Big(\frac{1}{\beta}\Big)^{1/p} \sum_{j=1}^n Z_j V_{\bullet, j} - \sum_{j=1}^n n^{\kappa/p} \frac{Z_j}{(\lVert Z^{(n)}  \rVert_p^p + W_n)^{1/p}} V_{\bullet,j} \Big\rVert_2 \ge \varepsilon\Big).\label{ineq1alphatilde}
\end{align}
Again, we need to show that the second summand on the right-hand side in the above converges to $0$, as $n \to \infty$. By Markov's inequality, it holds that
\begin{align*}
&\mathbb{P}\Big(\Big\lVert \Big(\frac{1}{\beta}\Big)^{1/p} \sum_{j=1}^n Z_j V_{\bullet, j} - \sum_{j=1}^n n^{\kappa/p} \frac{Z_j}{(\lVert Z^{(n)}  \rVert_p^p + W_n)^{1/p}} V_{\bullet,j} \Big\rVert_2 \ge \varepsilon\Big)\\
&\hspace{2cm} \le \varepsilon^{-1} \mathbb{E} \Big\lVert \Big(\frac{1}{\beta}\Big)^{1/p} \sum_{j=1}^n Z_j V_{\bullet, j} - \sum_{j=1}^n n^{\kappa/p} \frac{Z_j}{(\lVert Z^{(n)}  \rVert_p^p + W_n)^{1/p}} V_{\bullet,j} \Big\rVert_2,
\end{align*}
and a further application of the Cauchy--Schwarz inequality as in \eqref{eq:ineq2alpha} yields
\begin{align}
    &\mathbb{E} \Big\lVert \Big(\frac{1}{\beta}\Big)^{1/p} \sum_{j=1}^n Z_j V_{\bullet, j} - \sum_{j=1}^n n^{\kappa/p} \frac{Z_j}{(\lVert Z^{(n)}  \rVert_p^p + W_n)^{1/p}} V_{\bullet,j} \Big\rVert_2\notag\\
    &\qquad\le \sqrt{\mathbb{E}\Big\lVert \sum_{j=1}^n Z_j V_{\bullet, j}\Big\lVert_2^2} \cdot \sqrt{\mathbb{E}\Big| \Big(\frac{1}{\beta}\Big)^{1/p} - \frac{n^{\kappa/p}}{(\lVert Z^{(n)} \rVert_p^p + W_n)^{1/p}}\Big|^2},\label{eq:ineq2alphatilde}
\end{align}
with the first factor simplifying to $k \mathbb{E}[Z_1^2]$ as in \eqref{eq:L2eqalpha}. It remains to show that
\begin{equation}\label{LLNalphatilde}
    \xi_n := \Big(\Big(\frac{1}{\beta}\Big)^{1/p} - \frac{n^{\kappa/p}}{(\lVert Z^{(n)} \rVert_p^p + W_n)^{1/p}}\Big)^2 \longrightarrow 0
\end{equation}
in probability, as $n \to \infty$, to address the second factor. We again do so by showing that
\[
\frac{\lVert Z^{(n)} \rVert_p^p}{n^\kappa} + \frac{W_n}{n^\kappa} \longrightarrow \beta
\]
in probability due to the continuous mapping theorem.
Since $\kappa >1$, by the same arguments as in the proof of Theorem \ref{thm:alpha}, it follows that $\lVert Z^{(n)} \rVert_p^p/n^\kappa \to 0$ and the behaviour of $W_n$ dominates. By assumption, $W_n/n^\kappa \longrightarrow \beta$ in probability. In fact, we even have $\xi_n \longrightarrow 0$ in $L^1$. Indeed, since the sequence of random variables $(W_n/n^{\kappa})^{-2/p}$ is uniformly integrable by assumption, it follows that the sequence of random variables $\xi_n$ is uniformly integrable as well, which which in combination with \eqref{LLNalphatilde} yields convergence in $L^1$.
As a consequence, the second factor in \eqref{eq:ineq2alphatilde} converges to $0$. This implies that the second summand in \eqref{ineq1alphatilde} converges to $0$ uniformly in $V \in \mathbb{V}_{n,k}$. Altogether, we have proven that for any $\varepsilon>0$,
\[
\tilde{\mu}_{V X^{(n)}}(A) \le \mu_{V X^{(n)}} (A_\varepsilon) + \varepsilon
\]
for $n$ sufficiently large and can prove analogously that
\[
\mu_{V X^{(n)}}(A) \le \tilde{\mu}_{V X^{(n)}}(A_\varepsilon) + \varepsilon
\]
for $n$ sufficiently large, thus finishing the proof.
\end{proof}

Since the rest of the proof of Theorem \ref{thm:alpha} does not depend on the specific choice of $\alpha$ or the scaling of the $X^{(n)}$, the remainder of the proof of Theorem \ref{thm:beta} can proceed in the very same way. It remains only to present the proof of Lemma \ref{lem:Moments}.

\begin{proof}[Proof of Lemma \ref{lem:Moments}]
We start by observing that
\begin{align*}
    \mathbb{E}\left[\Big(\frac{n^\kappa}{W_n}\Big)^{4/p}\right] = n^{4\kappa/p}{b^{a_n}\over\Gamma(a_n)}\int_0^{\infty}x^{a_n-1-4/p}e^{-bx}\,\dint x = n^{4\kappa/p}{b^{4/p}\over\Gamma(a_n)}\Gamma\Big(a_n-{4\over p}\Big).
\end{align*}
According to the inequality \cite[Equation (12)]{jameson2013inequalities} for quotients of gamma functions (applied with $x=a_n+5-4/p$ and $y=4/p$) one has that
\begin{align*}
 {\Gamma(a_n-{4\over p})\over\Gamma(a_n)}&=\Big(\prod_{i=0}^4{(a_n+i)\over(a_n-{4\over p}+i)}\Big){\Gamma(a_n-{4\over p}+5)\over\Gamma(a_n+5)}\\
 &= \Big({1\over\prod_{i=0}^4(1-{4\over p(a_n+i)})}\Big){1\over{\Gamma(a_n-{4\over p}+5)\over\Gamma(a_n+5)}}\\
 &\leq\Big({1\over\prod_{i=0}^4(1-{4\over p(m+i)})}\Big){1\over (a_n+4-{4\over p})^{4/p}}\\
 &\leq\Big({1\over\prod_{i=0}^4(1-{4\over p(m+i)})}\Big){1\over a_n^{4/p}} =M_p\cdot{1\over a_n^{4/p}},
\end{align*}
where we also used that $p\geq 1$. By our assumption on the growth of $a_n$ and since $a_n$ is increasing it follows that
\begin{equation*} 
\mathbb{E}\left[\Big(\frac{n^\kappa}{W_n}\Big)^{4/p}\right] \leq n^{4\kappa/p}b^{4/p}M_pa_n^{-4/p}\leq n^{4\kappa/p}b^{4/p}M_p(\lambda m n^\kappa)^{-4/p} = b^{4/p}M_p(\lambda m)^{-4/p}
\end{equation*}
for all $n\in\NN$. This completes the proof.
\end{proof}

\addcontentsline{toc}{section}{References}

\bibliographystyle{amsplain}

\providecommand{\bysame}{\leavevmode\hbox to3em{\hrulefill}\thinspace}
\providecommand{\MR}{\relax\ifhmode\unskip\space\fi MR }
% \MRhref is called by the amsart/book/proc definition of \MR.
\providecommand{\MRhref}[2]{%
	\href{http://www.ams.org/mathscinet-getitem?mr=#1}{#2}
}
\providecommand{\href}[2]{#2}

\end{document}